\documentclass[12pt]{amsart}
\usepackage[margin=0in]{geometry}
\usepackage{latexsym}
\usepackage{amssymb}
\usepackage{amsmath}
\usepackage{amscd}
\usepackage{amsthm}
\usepackage{csquotes}
\usepackage{enumitem}
\usepackage[mathcal]{euscript}
\usepackage{hyperref}
\makeatletter

\allowdisplaybreaks
\geometry{left=1.5in, right=1in, includefoot, top=1in, bottom=1in}
\setlength\abovedisplayskip{0pt}
\pagestyle{plain}

\theoremstyle{plain}
\newtheorem{defn}{Definition}
\newtheorem{thm}{Theorem}
\newtheorem{prop}{Proposition}
\newtheorem{lem}{Lemma}
\newtheorem{cor}{Corollary}
 
\newtheorem*{rem}{Remark}
\newtheorem*{obs}{Observation}
\newtheorem*{thm*}{Theorem}

\newcommand{\Hom}{\operatorname{Hom}}
\newcommand{\Biggg}{\bBigg@{3}}

\title{The Number of Monodromy Representations of Abelian Varieties of Low $p$-Rank}
\author{Brett Frankel}
\date{}

\begin{document}
\maketitle

\begin{abstract}
Let $A_g$ be an abelian variety of dimension $g$ and $p$-rank $\lambda \leq 1$ over an algebraically closed field of characteristic $p>0$. We compute the number of homomorphisms from $\pi_1^{\text{\'et}}(A_g,a)$ to $GL_n(\mathbb F_q)$, where $q$ is any power of $p$. We show that for fixed $g$, $\lambda$, and $n$, the number of such representations is polynomial in $q$, and give an explicit formula for this polynomial. We show that the set of such homomorphisms forms a constructible set, and use the geometry of this space to deduce information about the coefficients and degree of the polynomial.

In the last section we prove a divisibility theorem about the number of homomorphisms from certain semidirect products of profinite groups into finite groups. As a corollary, we deduce that when $\lambda=0$, 
\[\frac{\#\Hom(\pi_1^{\text{\'et}}(A_g,a),GL_n(\mathbb F_q))}{|GL_n(\mathbb F_q)|}\]
is a Laurent polynomial in $q$.
\end{abstract}

\tableofcontents

\newpage
\section*{Acknowledgments}
This paper is derived from my Ph.D. thesis, and as such there are far too many people who have impacted this work than can be listed here. It is a pleasure to thank Ted Chinburg, my thesis advisor, for suggesting this problem and for countless discussions and guidance. Thanks also to David Harbater, Zach Scherr, and Ching-Li Chai for helpful discussions and answering my questions. I am grateful to Professor Fernando Rodriguez-Villegas for hosting me for a very productive week at the International Centre for Theoretical Physics. It was he who observed Corollary~\ref{Laurent}, and  introduced me to both his theorem with Cameron Gordon \cite{GV} and the theorem of Frobenius, \cite{F} from which one easily deduces Corollary~\ref{Laurent}. He also made me aware of Proposition~\ref{int}. Bob Guralnick very helpfully referred me to both his paper with Sethuraman \cite{GS} and earlier work of Gerstenhaber \cite{G}. Additional thanks are due to Nir Avni for comments on the draft and Sebastian Sewerin for help reading \cite{F}. The exposition here is much improved thanks to the referee's careful reading. Many others have had a great influence on my mathematical development, which no doubt manifests itself in some of the work that follows. There are too many to list here, but I have attempted to acknowledge many of these people in the thesis document. \cite{thesis} This work was partially supported by NSF FRG grant 1265290 and NSA grant H98230-14-1-0145.

\section{Introduction} 

In \emph{Mixed Hodge Polynomials of Character Varieties}, \cite{HRV} Hausel and Rodriguez-Villegas computed the number of homomorphisms from the fundamental group of a Riemann surface $X$ of genus $g$ into $GL_n(\mathbb F_q)$. By van Kampen's theorem, this is equivalent to counting $2g$-tuples of matrices,
\begin{align*}P(q)&=\#\Hom(\pi_1(X,x),GL_n(\mathbb F_q))\\
&=\#\{(X_1,Y_1,\cdots,X_g,Y_g):[X_1,Y_1][X_2,Y_2]\cdots[X_n,Y_n]=1\},\end{align*}
\noindent where $[X_i,Y_i]$ denotes the commutator $XYX^{-1}Y^{-1}$. They also consider a twisted version of the problem, computing
\begin{align*}R(q)&=\#\Hom_{twist}(\pi_1(X,x),GL_n(\mathbb F_q))\\
&=\#\{(X_1,Y_1,\cdots,X_g,Y_g):[X_1,Y_1][X_2,Y_2]\cdots[X_n,Y_n]=\zeta_n\},\end{align*}
where $\zeta_n$ is a primitive $n^\text{th}$ root of $1$.

The salient feature of both these counts is that, for fixed $g$ and $n$, $P$ and $R$ are both polynomial functions of $q$. Another curious feature is that $P(q)$ is divisible by the order of $GL_n(\mathbb F_q)$. This phenomenon is explained in \cite{GV}.

In the twisted case, $PGL_n$ acts scheme-theoretically freely on $\Hom_{twist}(\pi_1(X),GL_n)$, and the GIT quotient $\mathcal{M}_n=\Hom_{twist}(\pi_1(X,x),GL_n)//PGL_n$ is the moduli space of twisted homomorphisms from $\pi_1(X,x)$ to $GL_n$. Via a theorem of N. Katz, (in the appendix to \cite{HRV}), the polynomial point-counting formula is equal to the $E$-polynomial $E(\mathcal M_n, T)$ of $\mathcal M_n(\mathbb C)$, which encodes information about the weight and Hodge filtrations on the cohomology of $\mathcal M_n$. Baraglia and Hekmati \cite{BH} have more recently extended these methods to the moduli of untwisted representations.

The results in this paper may be thought of as an analogue of the combinatorial results in \cite{HRV} and \cite{BH}, now applied to varieties of positive characteristic. The most natural analogue would be consider a curve $C/\bar{\mathbb F}_p$ in place of a Riemann surface. However, in positive characteristic, the \'etale fundamental group is a much more subtle object. In particular, there is not a single curve of genus greater than $1$ for which an explicit presentation of the fundamental group is known. \cite{PS}

However, these fundamental groups have well-understood abelianization; the abelianization of the fundamental group of a curve is the fundamental group of the Jacobian variety, which is the dual of the Jacobian. This paper treats representations of fundamental groups of abelian varieties. 

Let $A_g$ be an abelian variety of dimension $g$. The $p$-torsion points of $A$ form a vector space over $\mathbb F_p$ of dimension $0\leq \lambda \leq g$. We call $\lambda$ the $p$-rank of $A_g$. Let $a\in A$. The \'etale fundamental group of $A_g$ is given by

\begin{align*} \pi_1^{\text{\'et}}(A_g,a)&\cong \prod_{\ell\neq p} (\mathbb Z_\ell)^{2g} \times \mathbb Z_p^\lambda\\
&\cong (\hat{\mathbb Z}^\prime)^{2g-\lambda}\times \hat{\mathbb Z}^\lambda,
\end{align*}
\noindent where the above product is taken over all prime numbers $\ell$ other than $p$,  
\[\hat{\mathbb Z}=\varprojlim \mathbb Z/n\]
is the profinite completion of the group of integers, and 
\[\hat{\mathbb Z}^\prime=\varprojlim_{(n,p)=1} \mathbb Z/n\]
is the group of integers completed away from the prime $p$. To lighten the notation, we will henceforth write $\pi_1(A_g)$ instead of $\pi_1^{\text{\'et}}(A_g,a)$.

Let $q$ be a power of $p$. A homomorphism $\pi_1(A_g)\to GL_n(\mathbb F_q)$ is determined by the image of the topological generators, so specifying a homomorphism is equivalent to choosing an ordered $2g$-tuple of pairwise commuting matrices, such that the first $2g-\lambda$ have order not divisible by $p$.

Theorems~\ref{prank0}~and~\ref{prank1} explicitly compute the number of homomorphisms $\pi_1(A_g)\to GL_n(\mathbb F_q)$, where $q$ is a power of $p$ and the $p$-rank of $A_g$ is either $0$ or $1$, respectively. Theorem~\ref{conj} computes the number of homomorphisms up to conjugation in the $p$-rank $0$ case. All three counting formulas are polynomial in $q$, depending on $g$ and $n$ but not on the characteristic.

Section~\ref{space} considers the space of all such representation, and relates the geometry of this space to certain features of the polynomial formulas in the previous section. We show that the counting polynomial has integer coefficients, and give a lower bound for the degree, which we show to be exact when $A$ is in elliptic curve.

In section~\ref{divisibility}, we state and prove the following theorem.
\begin{thm*}[Theorem \ref{profiniteGV}]
Let $S$ be a set of primes (not necessarily finite), and let $\hat{\mathbb{Z}}_S=\varprojlim \mathbb{Z}/n$, where the the inverse limit is taken over all natural numbers $n$ not divisible by any prime in $S$. Then for any topologically finitely generated profinite group $\Gamma$ and finite group $G$,
 \[\frac{\#\Hom(\Gamma \rtimes \hat{\mathbb{Z}}_S, G)}{|G|}\in \mathcal{S}^{-1}\mathbb Z,\]
 where $\Gamma \rtimes \hat{\mathbb{Z}}_S$ is any semidirect product of $\Gamma$ and $\hat{\mathbb{Z}}_S$, and $\mathcal{S}$ is the multiplicative set generated by the elements of $S$. Conversely, if $\tilde{\Gamma}$ is topologically finitely generated and
 \[\frac{\#\Hom(\tilde{\Gamma}, G)}{|G|}\in \mathcal{S}^{-1}\mathbb Z\]
 for all finite groups $G$, then there exists a $\Gamma$ with $\tilde{\Gamma}\cong \Gamma \rtimes \hat{\mathbb{Z}}_S$.
\end{thm*}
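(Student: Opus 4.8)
\textit{Strategy.} I will prove the two implications separately. The forward (divisibility) direction is the substantial one and reduces, via an orbit decomposition, to a coset refinement of Frobenius's theorem \cite{F} (the direct–product case is already \cite{GV}); the converse is a short argument using $\mathbb Z/\ell^N$ and the projectivity of $\hat{\mathbb Z}_S$.

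\textit{Forward direction.} Fix a topological generator $t$ of $\hat{\mathbb Z}_S$ and a continuous action $\alpha\colon\hat{\mathbb Z}_S\to\operatorname{Aut}(\Gamma)$. A continuous homomorphism $\phi\colon\Gamma\rtimes_\alpha\hat{\mathbb Z}_S\to G$ amounts to a pair $(\rho,g_0)$ with $\rho=\phi|_\Gamma\in\Hom(\Gamma,G)$, $g_0=\phi(t)$ of order prime to $S$ (so that $t\mapsto g_0$ extends to $\hat{\mathbb Z}_S$), and $g_0\rho(\gamma)g_0^{-1}=\rho(\alpha(t)\gamma)$ for all $\gamma$; density of $\langle\Gamma,t\rangle$ shows this is the only relation to impose, and $\Hom(\Gamma,G)$ is finite since $\Gamma$ is topologically finitely generated. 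Hence
\[
\#\Hom(\Gamma\rtimes_\alpha\hat{\mathbb Z}_S,G)=\sum_{\rho\in\Hom(\Gamma,G)}\#\{g_0\in G:\operatorname{ord}(g_0)\text{ prime to }S,\ g_0\rho(\gamma)g_0^{-1}=\rho(\alpha(t)\gamma)\ \forall\gamma\}.
\]
Conjugation by $G$ acts on the index set: if $\rho'=c\rho c^{-1}$ the inner set is conjugated by $c$, so its cardinality is constant on $G$-orbits, and the stabilizer of $\rho$ is $C_\rho:=C_G(\rho(\Gamma))$. Writing $A_\rho$ for the inner set and $A_\rho^{S'}$ for its elements of order prime to $S$, the displayed sum regroups to
\[
\frac{\#\Hom(\Gamma\rtimes_\alpha\hat{\mathbb Z}_S,G)}{|G|}=\sum_{[\rho]}\frac{\#A_\rho^{S'}}{|C_\rho|},
\]
the sum over $G$-orbits, so it suffices to show each summand lies in $\mathcal S^{-1}\mathbb Z$.

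Now analyze a single $\rho$ with $A_\rho\neq\varnothing$. Any $g_*\in A_\rho$ satisfies $g_*\rho(\Gamma)g_*^{-1}=\rho(\Gamma)$, so $g_*$ normalizes $C_\rho$, and $A_\rho=g_*C_\rho$. Set $H_\rho:=\langle g_*\rangle C_\rho$, a subgroup with $C_\rho\trianglelefteq H_\rho$ and $H_\rho/C_\rho$ cyclic of some order $d$, generated by $\overline{g_*}$. Every element of $g_*C_\rho$ has order divisible by $d$, so if a prime of $S$ divides $d$ then $A_\rho^{S'}=\varnothing$ and the summand is $0$; otherwise $d$ is prime to $S$. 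In that case let $m$ be the part of $|H_\rho|$ prime to $S$; then $d\mid m$, and an element of $H_\rho$ has order prime to $S$ iff its $m$-th power is trivial, so $\#A_\rho^{S'}=\#\{y\in g_*C_\rho:y^m=1\}$. By the coset form of Frobenius's theorem \cite{F} --- in a finite group $K$ with normal subgroup $N$, if $\operatorname{ord}(\bar x\in K/N)$ divides $m$ then $\gcd(m,|N|)$ divides $\#\{y\in xN:y^m=1\}$ --- applied to $K=H_\rho$, $N=C_\rho$, $x=g_*$, we conclude $\#A_\rho^{S'}$ is divisible by $\gcd(m,|C_\rho|)$, which is exactly the part of $|C_\rho|$ prime to $S$. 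Therefore $\#A_\rho^{S'}/|C_\rho|$ has denominator a product of primes of $S$, i.e.\ lies in $\mathcal S^{-1}\mathbb Z$, and summing over orbits proves the forward direction. The main obstacle is organizational: carrying out the reduction from the profinite semidirect product to a single coset inside a finite group; the coset-Frobenius input is then black-box.

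\textit{Converse.} Assume $\tilde\Gamma$ is topologically finitely generated and $\#\Hom(\tilde\Gamma,G)/|G|\in\mathcal S^{-1}\mathbb Z$ for every finite $G$ (if $S$ is the set of all primes then $\hat{\mathbb Z}_S=1$ and there is nothing to prove). Fix $\ell\notin S$ and take $G=\mathbb Z/\ell^N$: then $\#\Hom(\tilde\Gamma,\mathbb Z/\ell^N)=\#\Hom(\tilde\Gamma^{\mathrm{ab}}_\ell,\mathbb Z/\ell^N)$, where $\tilde\Gamma^{\mathrm{ab}}_\ell$ is the pro-$\ell$ part of the abelianization. Were $\tilde\Gamma^{\mathrm{ab}}_\ell$ finite, this count would stabilize at $|\tilde\Gamma^{\mathrm{ab}}_\ell|$ for $N\gg0$, forcing $\#\Hom(\tilde\Gamma,\mathbb Z/\ell^N)/\ell^N$ to acquire an $\ell$ in its denominator for large $N$ --- impossible since $\ell\notin S$. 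Hence $\tilde\Gamma^{\mathrm{ab}}_\ell$ is infinite, so it has positive $\mathbb Z_\ell$-rank and surjects onto $\mathbb Z_\ell$. Using $\tilde\Gamma^{\mathrm{ab}}=\prod_\ell\tilde\Gamma^{\mathrm{ab}}_\ell$ and doing this for every $\ell\notin S$ yields a continuous surjection $\pi\colon\tilde\Gamma\twoheadrightarrow\prod_{\ell\notin S}\mathbb Z_\ell=\hat{\mathbb Z}_S$. Since $\mathrm{cd}(\hat{\mathbb Z}_S)\le1$, the group $\hat{\mathbb Z}_S$ is projective among profinite groups, so $\pi$ admits a continuous section; then $\Gamma:=\ker\pi$ satisfies $\tilde\Gamma\cong\Gamma\rtimes\hat{\mathbb Z}_S$, as required.
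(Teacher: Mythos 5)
Your proof is correct, but it takes a genuinely different route from the paper's in both directions, and one citation needs repair. In the forward direction you handle $\hat{\mathbb{Z}}_S$ in a single stroke and reduce, after the same orbit decomposition of $\Hom$ under $G$-conjugation that the paper uses, to a general ``coset form of Frobenius's theorem'': in a finite $K$ with $N\trianglelefteq K$ and $\operatorname{ord}(\bar x)\mid m$, the number $\#\{y\in xN:y^m=1\}$ is divisible by $\gcd(m,|N|)$. The paper instead factors $\hat{\mathbb{Z}}_S\cong\hat{\mathbb{Z}}_{S\cup\{p\}}\times\mathbb{Z}_p$ to reduce to a single prime at a time, and then \emph{proves} the coset divisibility it needs (Lemma~\ref{coset}: in $Hg$ with $g$ a $p$-element normalizing $H$ and $|H|=p^rm$, $p\nmid m$, the set of $p$-elements has size divisible by $p^r$) from the plain Frobenius theorem~\ref{Frob} via an orbit count under $(\mathbb Z/p^{r+s})^\times$. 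Your coset-Frobenius statement is a true classical theorem (it is due to P.~Hall, 1936, generalizing Frobenius's count to cosets) and it does subsume the paper's Lemma~\ref{coset}, but it is not in Frobenius's original paper \cite{F}; as written, you have effectively shifted the content of Lemma~\ref{coset} into a miscited black box, so you should either cite Hall or supply the orbit argument. The rest of your forward argument --- identifying $A_\rho$ as a coset $g_*C_\rho$, noting that $\gcd(m,|C_\rho|)$ is exactly the prime-to-$S$ part of $|C_\rho|$, and summing over orbits --- is sound. In the converse, your deduction that each $\tilde\Gamma^{\mathrm{ab}}_\ell$ ($\ell\notin S$) must be infinite, hence surjects onto $\mathbb Z_\ell$, matches the paper's lemma in substance; but where the paper then produces a section of $\tilde\Gamma\twoheadrightarrow\hat{\mathbb{Z}}_S$ by hand (take a preimage $h$ of a topological generator, decompose $\overline{\langle h\rangle}$ and extract a closed $\hat{\mathbb{Z}}_S$ direct factor mapping isomorphically), you instead invoke $\mathrm{cd}(\hat{\mathbb{Z}}_S)\le 1$ and hence the projectivity of $\hat{\mathbb{Z}}_S$ among profinite groups. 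Both are valid; yours is shorter but uses heavier machinery, while the paper's is self-contained.
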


Theorem~\ref{profiniteGV} is the profinite analogue of a divisibility theorem by Gordon and Villegas\cite{GV}. We deduce as a corollary that when the $\lambda=0$, $\Hom(\pi_1(A_g),GL_n(\mathbb F_q))/|GL_n(\mathbb F_q)|$ is a Laurent polynomial in $q$.

\section{Counting Formulas}\label{count}
Our main reference in this section is Macdonald's \emph{Symmetric Functions and Hall Polynomials}. \cite{Mac}
Throughout, $p$ will be a prime number, and $q$ will denote a power of $p$.
\subsection{Some Linear Algebra}


 For any matrix $X\in GL_n(\mathbb F_q)$ we have an associated $\mathbb F_q[T]$-module structure on $\mathbb F_q^n$, where $T$ acts by $X$.
 If $X$ and $Y$ induce isomorphic module structures on $\mathbb F_q^n$, then the isomorphism of $\mathbb F_q[T]$-modules defines an element of $GL_n(\mathbb F_q)$, so $X$ and $Y$ are conjugate.
Note that $X\in GL_n(\mathbb F_q)$ has order prime to $p$ if and only if $X$ acts semisimply on $\mathbb F_q^n$. That is, $X$ is diagonalizable over $\bar{\mathbb F}_q$. Since elements of order prime to $p$ act semisimply, the associated $\mathbb F_q[T]$-module is a direct sum of simple modules of the form $\mathbb F_q[T]/f(T)$, where the $f(T)$ are irreducible but not necessarily distinct.
 Matrices of order prime to $p$ are thus uniquely characterized, up to conjugation, by their characteristic polynomials.
 
 \subsection{Polynomials and their Types}

 \begin{defn} A \emph{type} $\Lambda$ of $n$ is a partition of $n$ along with a refinement of its conjugate. That is, a partition $\lambda$ of $n$ along with partitions $\lambda^i$ of the multiplicities of the entries $i$ of $\lambda$. \end{defn}
This is a slight generalization of what Macdonald calls a type in \cite{Mac}.
 For example, the data $\lambda=(5\ 3\ 3\ 3\ 3\ 3\ 2\ 2\ 2\ 1\ 1)=(5^{(1)}\ 3^{(5)}\ 2^{(3)}\ 1^{(2)})$, $\lambda^5=(1)$, $\lambda^4=\emptyset$, $\lambda^3=(2\ 2\ 1)$, $\lambda^2=(2\ 1)$, $\lambda^1=(2)$ give a type of $28$.
 We shall write $\lambda\vdash n$ when $\lambda$ is a partition of $n$, and $\Lambda \vDash n$ when $\Lambda$ is a type of $n$. 
 By \[\prod_{(i,r)\in\Lambda}f(i,r)\] we will mean the product taken over all pairs $(i,r)$ such that $i\in\lambda$ and $r\in\lambda^i$. We index over $\lambda$ without multiplicity, but over $r\in\lambda^i$ with multiplicity. So if
\[\Lambda=\begin{cases} \lambda=(3\ 3\ 3\ 3\ 3\ 1\ 1)\\
\lambda^3=(2\ 2\ 1)\\
\lambda^1=(2)\end{cases}\text{,}\]
then
\[\prod_{(i,r)\in\Lambda}f(i,r)=f(3,2)^2f(3,1)f(1,2).\]

 We associate to an $n\times n$ matrix $X$ a type $\Lambda\vDash n$ by considering the characteristic polynomial $c_X(T)$ of $X$. Factoring $c_X(T)$ into irreducible factors gives a partition of $\lambda\vdash n$, where the entries of $\lambda$ are the degrees of the irreducible factors of $c_X(T)$, counted with multiplicity. We then take $\lambda^i$ to be the partition consisting of the multiplicity of each distinct degree-$i$ factor of $c_X(T)$.
 
 For example, over $\mathbb F_3$, we associate $\lambda= (3\ 3\ 1\ 1\ 1\ 1\ 1\ 1)$, $\lambda^3=(2)$, $\lambda^1=(3\ 3)$ to the polynomial $(T^3+2T+1)^2(T-2)^3(T-1)^3$.
 
 Recall that the number of irreducible monic polynomials over $\mathbb F_q$ of degree $i$ is 
 \begin{equation*}\label{eq: 1}\frac{1}{i}\sum_{k\mid i} \mu(k)q^{i/k},\end{equation*}
 where $\mu$ is the M\"obius function. \cite{Lang}
\begin{defn} Denote by $\psi_{\Lambda}(q)$ the number of monic polynomials $p(T)\in \mathbb F_q(T)$ with factorization type $\Lambda$.\end{defn}
 Note that $\psi_{\Lambda}(T)\in\mathbb Q[T]$, but in general $\psi_{\Lambda}(T)\notin\mathbb Z[T]$. For instance, if $\lambda=(1\ 1)$, $\lambda^1=(1\ 1)$, then
\[\psi_{\Lambda}(q)=\frac{(q-1)(q-2)}{2}.\]

\subsection{Counting}\label{countingsection}

We now count the number of homomorphisms from $\pi_1(A_g)$ to $GL_n(\mathbb F_q)$, where the $p$-rank of $A_g$ is $0$. This is equivalent to counting $2g$-tuples of commuting matrices $(X_1,\ldots, X_{k})$ such that each $X_i$ has order prime to $p$. Since the combinatorial arguments below do not make use of the fact that $2g$ is even, we may state a slightly stronger theorem.
\begin{thm}\label{prank0} The number of ordered $k$-tuples of pairwise-commuting, semisimple, invertible matricies with entries in $\mathbb F_q$ is 
\begin{align*}|GL_n(q)|\sum_{\Lambda_1\vDash n}\psi_{\Lambda_1}(q)\prod_{(i_1,r_1)\in\Lambda_1}\sum_{\Lambda_2\vDash r_1}\psi_{\Lambda_2}(q^{i_1})
\prod_{(i_2,r_2)\in\Lambda_2}\cdots \\
 \cdots \sum_{\Lambda_{k-1}\vDash r_{k-2}}\psi_{\Lambda_{k-1}}(q^{i_1i_2\cdots i_{k-2}})\prod_{(i_{k-1},r_{k-1})\in\Lambda_{k-1}}\sum_{\Lambda_{k}\vDash r_{k-1}}\frac{\psi_{\Lambda_{k}}(q^{i_1i_2\cdots i_{k-1}})}
{\prod\limits_{(i_{k},r_k)\in\Lambda_{k}}|GL_{r_{k}}(q^{i_1i_2\cdots i_{k}})|}.
\end{align*}
\end{thm}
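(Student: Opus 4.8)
The plan is to set up the count recursively by simultaneously diagonalizing the tuple one matrix at a time. Since all the $X_j$ commute and are semisimple, they are simultaneously diagonalizable over $\bar{\mathbb F}_q$, and the $\mathbb F_q[T_1,\ldots,T_k]$-module structure on $\mathbb F_q^n$ decomposes as a direct sum of simple modules. First I would count in two stages: (a) fix a conjugacy class datum for the whole tuple, and (b) multiply by the size of the $GL_n(q)$-orbit, which is $|GL_n(q)|$ divided by the order of the common centralizer. Because the final module is a direct sum $\bigoplus \mathbb F_{q^{d}}$-lines (with various field-of-definition degrees $d$ coming from Galois orbits of eigenvalue tuples), the centralizer order will be a product of $|GL_{m}(q^{d})|$ factors, which is exactly what appears in the denominator of the claimed formula. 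So the leading $|GL_n(q)|$ and the trailing $1/\prod |GL_{r_k}(\cdots)|$ are accounted for by the orbit-stabilizer computation.

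Next I would peel off $X_1$. Choosing the conjugacy class of $X_1$ (a semisimple invertible matrix) is the same as choosing its characteristic polynomial, i.e. a monic degree-$n$ polynomial with all roots nonzero, together with — implicitly — a factorization type $\Lambda_1 \vDash n$; the number of monic polynomials of type $\Lambda_1$ is $\psi_{\Lambda_1}(q)$ by the definition in the excerpt (invertibility just excludes the factor $T$, but the formula as written ranges over all types and one checks the $T$-divisible contributions are handled consistently, or one restricts $\psi$ appropriately). Fixing $X_1$ of type $\Lambda_1$ splits $\mathbb F_q^n$ into isotypic pieces indexed by the distinct irreducible factors $f$ of $c_{X_1}$: if $f$ has degree $i_1$ and multiplicity accounted for by a part $r_1$ of $\lambda_1^{i_1}$, the corresponding block is an $r_1$-dimensional vector space over the extension field $\mathbb F_{q^{i_1}} = \mathbb F_q[T]/f$. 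Any further matrix commuting with $X_1$ preserves each block and acts $\mathbb F_{q^{i_1}}$-linearly on it; hence counting the remaining $(k-1)$-tuple of commuting semisimple matrices reduces, block by block, to the same problem over $\mathbb F_{q^{i_1}}$ in dimension $r_1$. This is precisely the recursion encoded by $\prod_{(i_1,r_1)\in\Lambda_1}\sum_{\Lambda_2 \vDash r_1}\psi_{\Lambda_2}(q^{i_1})\cdots$, with the field size getting raised to the accumulated product $q^{i_1 i_2 \cdots}$ at each stage.

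Carrying this out: I would prove the formula by induction on $k$. The base case $k=1$ is just the statement that the number of semisimple invertible $n\times n$ matrices equals $|GL_n(q)|\sum_{\Lambda \vDash n}\psi_\Lambda(q)/\prod|GL_{r}(q^{i})|$, which is the standard orbit-counting identity: sum over characteristic polynomials, each contributing an orbit of size $|GL_n(q)|/|\mathrm{Centralizer}|$ and the centralizer of a semisimple class with isotypic blocks of $\mathbb F_{q^i}$-dimension $r$ is $\prod GL_r(\mathbb F_{q^i})$. For the inductive step, fix $X_1$, decompose into isotypic blocks as above, observe that a commuting $(k-1)$-tuple of semisimple invertibles is the same data as an independent such tuple on each block (these matrices automatically commute across blocks since the blocks are the $X_1$-isotypic summands), and apply the inductive hypothesis over each $\mathbb F_{q^{i_1}}$; then sum over $\Lambda_1$ weighted by $\psi_{\Lambda_1}(q)$, the number of choices of $c_{X_1}$ of that type. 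The $|GL_n(q)|$ prefactor is recovered because the innermost denominators, multiplied up the tree, telescope to $|\mathrm{Centralizer of the whole tuple}|$ via the standard identity $|GL_m(q^{d_1 d_2})| / \bigl(\text{refinement}\bigr)$ bookkeeping — more precisely, one checks that $|GL_{r_{j-1}}(q^{i_1\cdots i_{j-1}})| \cdot \bigl(\text{stuff at level }j\bigr) = |GL_n(q)| \cdot (\text{rest})$ assembles correctly.

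The main obstacle, I expect, is the bookkeeping of the $|GL|$ factors across the recursion — making sure that the single $|GL_n(q)|$ out front, together with the nested denominators $1/\prod |GL_{r_k}(q^{i_1\cdots i_k})|$ at the very bottom and the conspicuous \emph{absence} of $|GL|$ factors at the intermediate levels, correctly reproduces $|GL_n(q)|/|Z|$ where $Z$ is the centralizer of the whole commuting tuple. The cleanest way to handle this is probably to prove a sharper statement by induction: that the number of commuting semisimple invertible $k$-tuples \emph{extending a fixed simultaneously-diagonalized configuration on a given space} is the product of $1/|GL_{r_k}(\cdots)|$ terms times $|Z|$ for the appropriate $Z$, so that the orbit-stabilizer factor $|GL_n(q)|/|Z|$ only gets introduced once, at the outermost layer. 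A secondary, more minor point to get right is the invertibility constraint (excluding eigenvalue $0$): I would either define $\psi_\Lambda$ to count only polynomials with nonzero constant term, or note that since we need all $X_j$ invertible the polynomial $T$ never appears and the formula is unaffected.
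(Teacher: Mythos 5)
Your proof is correct, and it uses a genuinely different induction from the paper's. The paper proves the $k=2$ case directly, then inducts by \emph{appending} $X_k$: it assumes the formula for $(k-1)$-tuples together with an auxiliary running hypothesis that the innermost types $\Lambda_{k-1}$ faithfully describe the isotypic decomposition of $\mathbb F_q^n$ under $\mathbb F_q[X_1,\ldots,X_{k-1}]$, then multiplies by the count of possible $X_k$-actions on each summand and cancels the innermost $|GL|$ denominators against the new centralizer factors. You instead induct by \emph{peeling off} $X_1$: fix $X_1$ of type $\Lambda_1$, observe that a commuting semisimple $(k-1)$-tuple is exactly an independent choice of commuting semisimple $(k-1)$-tuple in $GL_{r_1}(\mathbb F_{q^{i_1}})$ on each isotypic block, and apply the inductive hypothesis over the extension field. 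This matches the nested shape of the formula more directly and dispenses with the auxiliary hypothesis the paper carries through its induction. Your worry about $|GL|$ bookkeeping resolves more cleanly than you anticipated: the number of $X_1$ of type $\Lambda_1$ is $\psi_{\Lambda_1}(q)\cdot|GL_n(q)|/\prod_{(i_1,r_1)}|GL_{r_1}(q^{i_1})|$, and the inductive formula for each block contributes a leading factor $|GL_{r_1}(q^{i_1})|$, so the centralizer cancels in one step, leaving the single $|GL_n(q)|$ out front — no telescoping gymnastics, and no need to prove a sharper relative statement. Your secondary concern about invertibility is also a non-issue: the paper's $\psi_\Lambda$ already counts only polynomials with nonzero constant term, as the worked example $\psi_{\Lambda_c}(q)=(q-1)(q-2)/2$ makes clear.
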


To illustrate, we consider the case $k=n=2$. Theorem~\ref{prank0} asserts number of commuting semisimple pairs $(X,Y)\in GL_2 (\mathbb F_q)^2$ is 
\[|GL_2(\mathbb F_q)|\frac{(q^3-q^2-q+1)}{q}=q^6-2q^5-q^4+4q^3-q^2-2q+1\text{,}\]
as follows.

There are three types of $2$, and one type of $1$.

\begin{tabular}{|c|c|c|c|}
\hline
$\Lambda$&$\lambda$ & $\{\lambda^i\}$& $\psi_{\Lambda}(q)$\\ \hline
$\Lambda_a\vDash 2$&$\lambda=(2)$ & $\lambda^2=(1)$&$\psi_{\Lambda_a}(q)=\frac{q^2-q}{2}$\\ \hline
$\Lambda_b\vDash 2$&$\lambda=(1\ 1)$& $\lambda^1= (2)$&$\psi_{\Lambda_b}(q)=(q-1)$\\ \hline
$\Lambda_c\vDash 2$&$\lambda=(1\ 1)$&$\lambda^1=(1\ 1)$&$\psi_{\Lambda_c}(q)=\frac{(q-1)(q-2)}{2}$\\ \hline
$\Lambda_d\vDash 1$&$\lambda=(1)$&$\lambda^1=(1)$& $\psi_{\Lambda_d}(q)=q-1$\\ \hline
\end{tabular}

The formula in Theorem~\ref{prank0} for $k=n=2$ is thus 
\small
\begin{align*} 
&|GL_2(\mathbb F_q)|\sum_{\Lambda \vDash 2} \psi_\Lambda(q)\prod_{(i,r)\in\Lambda}\sum_{N\vDash r}\frac{\psi_N(q^{i})}{\prod\limits_{(\ell,s)\in N}|GL_{s}(\mathbb F_{q^{i\ell}})|}\\
&\ \\
=&|GL_2(\mathbb F_q)|\left[\frac{(q^2-q)}{2}\prod_{(i,r)\in\Lambda_a}\sum_{N\vDash r}\frac{\psi_N(q^{i})}{\prod\limits_{(\ell,s)\in N}|GL_{s}(\mathbb F_{q^{i\ell}})|}\right.\\*
&+(q-1)\prod_{(i,r)\in\Lambda_b}\sum_{N\vDash r}\frac{\psi_N(q^{i})}{\prod\limits_{(\ell,s)\in N}|GL_{s}(\mathbb F_{q^{i\ell}})|}\\*
&+\left.\frac{(q-1)(q-2)}{2}\prod_{(i,r)\in\Lambda_c}\sum_{N\vDash r}\frac{\psi_N(q^{i})}{\prod\limits_{(\ell,s)\in N}|GL_{s}(\mathbb F_{q^{i\ell}})|}      \right]\\*
&\ \\
=&|GL_2(\mathbb F_q)|\left[\frac{(q^2-q)}{2}\sum_{N\vDash 1}\frac{\psi_N(q^{2})}{\prod\limits_{(\ell,s)\in N}|GL_{s}(\mathbb F_{q^{2\ell}})|}\phantom{{\left(\frac{q-1}{\prod\limits_{(\ell,s)\in \Lambda_d}|GL_{s}(\mathbb F_{q^{\ell}})|}\right)}^2}\right.\\*
&+(q-1)\sum_{N\vDash 2}\frac{\psi_N(q)}{\prod\limits_{(\ell,s)\in N}|GL_{s}(\mathbb F_{q^{\ell}})|}\\*
&+\left.\frac{(q-1)(q-2)}{2}{\left(\sum_{N\vDash 1}\frac{\psi_N(q)}{\prod\limits_{(\ell,s)\in N}|GL_{s}(\mathbb F_{q^{\ell}})|}\right)}^2 \right]\\*
&\ \\
=&|GL_2(\mathbb F_q)|\left[\left(\frac{q^2-q}{2}\right)\frac{\psi_{\Lambda_d}(q^{2})}{\prod\limits_{(\ell,s)\in \Lambda_d}|GL_{s}(\mathbb F_{q^{2\ell}})|}\phantom{{\left(\frac{q-1}{\prod\limits_{(\ell,s)\in \Lambda_d}|GL_{s}(\mathbb F_{q^{\ell}})|}\right)}^2}\right.\\*
&+(q-1)\left(\frac{\psi_{\Lambda_a}(q)}{\prod\limits_{(\ell,s)\in \Lambda_a}|GL_{s}(\mathbb F_{q^{\ell}})|}+\frac{\psi_{\Lambda_b}(q)}{\prod\limits_{(\ell,s)\in \Lambda_b}|GL_{s}(\mathbb F_{q^{\ell}})|}+\frac{\psi_{\Lambda_c}(q)}{\prod\limits_{(\ell,s)\in \Lambda_c}|GL_{s}(\mathbb F_{q^{\ell}})|}\right)\\*
&+\left.\frac{(q-1)(q-2)}{2}{\left(\frac{\psi_{\Lambda_d}(q)}{\prod\limits_{(\ell,s)\in \Lambda_d}|GL_{s}(\mathbb F_{q^{\ell}})|}\right)}^2      \right]\\
&\ \\
=&|GL_2(\mathbb F_q)|\left[\left(\frac{q^2-q}{2}\right)\frac{(q^{2}-1)}{\prod\limits_{(\ell,s)\in \Lambda_d}|GL_{s}(\mathbb F_{q^{2\ell}})|}\phantom{{\left(\frac{q-1}{\prod\limits_{(\ell,s)\in \Lambda_d}|GL_{s}(\mathbb F_{q^{\ell}})|}\right)}^2}\right.\\*
&+(q-1)\left(\frac{\left(\frac{q^2-q}{2}\right)}{\prod\limits_{(\ell,s)\in \Lambda_a}|GL_{s}(\mathbb F_{q^{\ell}})|}+\frac{q-1}{\prod\limits_{(\ell,s)\in \Lambda_b}|GL_{s}(\mathbb F_{q^{\ell}})|}+\frac{q-1}{\prod\limits_{(\ell,s)\in \Lambda_c}|GL_{s}(\mathbb F_{q^{\ell}})|}\right)\\*
&+\left.\frac{(q-1)(q-2)}{2}{\left(\frac{q-1}{\prod\limits_{(\ell,s)\in \Lambda_d}|GL_{s}(\mathbb F_{q^{\ell}})|}\right)}^2      \right]\\
&\ \\
=&|GL_2(\mathbb F_q)|\Biggg[\left(\frac{q^2-q}{2}\right)\frac{(q^{2}-1)}{q^2-1}+(q-1)\left(\frac{\left(\frac{q^2-q}{2}\right)}{q^2-1}+\frac{q-1}{(q^2-q)(q^2-1)}+\frac{\left(\frac{(q-1)(q-2)}{2}\right)}{{\left(q-1\right)}^2}\right)\\*
&+\frac{(q-1)(q-2)}{2}{\left(\frac{q-1}{q-1}\right)}^2   \Biggg]   \\
&\ \\
&=|GL_2(\mathbb F_q)|\frac{(q^3-q^2-q+1)}{q}\\*
&=q^6-2q^5-q^4+4q^3-q^2-2q+1\text{.}
\end{align*}
\normalsize
\noindent Similarly, the number of semisimple commuting pairs in $GL_3(\mathbb F_q)$ is 
\begin{align*}&| GL_3(\mathbb F_q)|\frac{q^6-q^5-q^4+2q^3-q^2+q-1}{q^3}\\
=&q^{12}-2 q^{11}-q^{10}+4 q^9-q^8-4 q^6+2 q^5+3 q^4-2 q^3+q^2-2 q+1\text{.}\end{align*}
\begin{proof}[Proof of Theorem \ref{prank0}]
 We first prove the theorem for $k=2$. Since semisimple matrices are characterized, up to conjugation, by their characteristic polynomials, we associate to each such conjugacy class a type $\Lambda$. There are by definition $\psi_\Lambda(q)$ conjugacy classes of type $\Lambda$.
 Suppose $X_1$ is any matrix of order prime to $p$, with associated type $\Lambda$. A matrix $Y$ commutes with $X_1$ if and only if $Y$ acts $X$-equivariantly on $\mathbb F_q^n$.
 That is, writing $\mathbb F_q^n$ as a sum of simple modules \[\mathbb F_q^n\cong\bigoplus_j(\mathbb F_q[T]/f_j(T))^{r_j}\text{,}\] the action of $Y$ is a $\mathbb F_q[T]$-automorphism of each $(\mathbb F_q[T]/f_j(T))^{r_j}$.
 Specifying such an action is given by an element of $GL_{r_j}(\mathbb F_{q^{\operatorname{deg} f_j}})$. The order of the centralizer of a semisimple matrix $X_1$ with type $\Lambda$ is thus
 \begin{equation}\label{eq: centX} \prod_{f_j}|GL_{r_j}(\mathbb F_{q^{\operatorname{deg} f_j}})|=\prod_{(i,r)\in\Lambda}|GL_{r}(\mathbb F_{q^i})|.\end{equation}

The matrix $X_2$ commutes with $X_1$ and also must be semisimple, so $X_2$ acts semisimply on each $(\mathbb F_q[T]/f_j(T))^{r_j}$. 
A semisimple matrix $Y_j \in GL_{r_j}(\mathbb F_{q^{\deg f_j}})$ is determined, up to conjugacy, by its characteristic polynomial, or equivalently by a type $N=(\nu,\nu^\ell)\vDash r_j$ and a polynomial of type $N$ in $\mathbb F_{q^{\deg f_j}}[T]$.
By the same argument used to compute the cardinality of the centralizer of $X_1$, we see that index of the centralizer of $Y_j\in GL_{r_j}(\mathbb F_{q^{\operatorname{deg} f_j}})$ is 
\[\frac{| GL_{r_j}(\mathbb F_{q^{\deg f_j}})|}{\prod\limits_{(\ell,s)\in N}|GL_{s}(\mathbb F_{q^{\deg (f_j)\ell}})|}\ \text{.}\]
The number of all such $Y_j$ is therefore 
\begin{equation}\label{eq: orbit}\sum_{N\vDash r_j}\left(\psi_N(q^{\deg f_j})\frac{| GL_{r_j}(\mathbb F_{q^{\deg f_j}})|}{\prod\limits_{(\ell,\nu)\in N}|GL_{s}(\mathbb F_{q^{\deg (f_j)\ell}})|}
\right).\end{equation}
So, if $X_1$ is semisimple with type $\Lambda$, the number of semisimple matrices $X_2$ commuting with $X_1$ is

\begin{align}
&\prod_{f_j} \sum_{N\vDash r_j}\left(\psi_N(q^{\deg f_j})\frac{| GL_{r_j}(\mathbb F_{q^{\deg f_j}})|}{\prod\limits_{(\ell,s)\in N}|GL_{s}(\mathbb F_{q^{\deg (f_j)\ell}})|}\right)\notag\\
=&\prod_{(i,r)\in\Lambda}\sum_{N\vDash r}\left(\psi_N(q^{i})\frac{| GL_{r}(\mathbb F_{q^i})|}{\prod\limits_{(\ell,s)\in N}|GL_{s}(\mathbb F_{q^{i\ell}})|}\right)\label{eq:ss}.
\end{align}

Given a semisimple matrix $X\in GL_n(\mathbb F_q)$, we have just computed both the number of matrices (\ref{eq: centX}) that commute with $X$ and the number of semisimple matrices (\ref{eq:ss}) that commute with $X$.
 Note that both of these numbers depend only on the type associated to $X$, since the $\operatorname{deg} (f_j)$'s are the entries of $\lambda$, and the $r_j$'s are the entries of the corresponding $\lambda^{\operatorname{deg}f_j}$'s.
 
 From these two computations, we see that the number of pairs $(X_1,X_2)$ of commuting semisimple matrices in $GL_n(\mathbb F_q)$ is
\begin{equation}\label{eq: 2}
 \sum_{\Lambda \vDash n} \sum_{\psi(\Lambda)} \frac{| GL_n(\mathbb F_q)|}{\prod\limits_{(i,r)\in\Lambda} |GL_{r}(\mathbb F_{q^i})|}\prod\limits_{(i,r)\in\Lambda}\sum_{N\vDash r}\left(\psi_N(q^{i})\frac{| GL_{r}(\mathbb F_{q^i})|}{\prod\limits_{(\ell,s)\in N}|GL_s(\mathbb F_{q^{i\ell}})|}\right)
 \end{equation}
 where the second sum is taken over polynomials with type $\Lambda$. Since 
 \[\frac{| GL_n(\mathbb F_q)|}{\prod\limits_{(i,r)\in\Lambda} |GL_{r}(\mathbb F_{q^i})|}\prod_{(i,r)\in\Lambda} \sum_{N\vDash r}\left(\psi_N(q^{i})\frac{ |GL_{r}(\mathbb F_{q^i})|}{\prod\limits_{(\ell,s)\in N}|GL_s(\mathbb F_{q^{i\ell}})|}\right)\]
depends only on $\Lambda$, we may simplify (\ref{eq: 2}) by replacing the second summation with multiplication by $\psi_\Lambda(q)$. Thus (\ref{eq: 2}) simplifies to 
 \begin{equation}\label{eq: 3}
  |GL_n(\mathbb F_q)|\sum_{\Lambda \vDash n} \psi_\Lambda(q)\prod_{(i,r)\in\Lambda}\sum_{N\vDash r}\frac{\psi_N(q^{i})}{\prod\limits_{(\ell,s)\in N}|GL_{s}(\mathbb F_{q^{i\ell}})|}.
\end{equation}
We now continue by induction. Suppose the number of pairwise commuting
\newline \noindent$(k-1)$-tuples of semisimple elements of $GL_n(\mathbb F_q)$ is
\begin{align*}|GL_n(q)|\sum_{\Lambda_1\vDash n}\psi_{\Lambda_1}(q)\prod_{(i_1,r_1)\in\Lambda_1}\sum_{\Lambda_2\vDash r_1}\psi_{\Lambda_2}(q^{i_1})\prod_{(i_2,r_2)\in\Lambda_2}\cdots \\
 \cdots \sum_{\Lambda_{k-2}\vDash r_{k-3}}\psi_{\Lambda_{k-2}}(q^{i_1i_2\cdots i_{k-3}})\prod_{(i_{k-2},r_{k-2})\in\Lambda_{k-2}}\sum_{\Lambda_{k-1}\vDash r_{k-2}}\frac{\psi_{\Lambda_{k-1}}(q^{i_1i_2\cdots i_{k-2}})}{\prod\limits_{(i_{k-1},r_{k-1})\in\Lambda_{k-1}} |GL_{r_{k-1}}(q^{i_1i_2\cdots i_{k-1}})|}\text{.}
\end{align*}
Suppose further that for each possible action of $X_1,\ldots,X_{k-1}$ on $\mathbb F_q^n$, the action of $X_{k-1}$ on an isotypic summands of the $\mathbb F_q[X_1,\ldots,X_{k-2}]$-module $\mathbb F_q^n$ has characteristic polynomials of type $\Lambda_{k-1}$ as above when these isotypic summands are viewed as $\mathbb F_{q^{i_1i_2\cdots i_{k-2}}}$-vector spaces. (We note that $\Lambda_{2k-1}$ may be different for different summands; indeed, $\Lambda_{k-1}\vDash r_{k-2}$, and the symbol $r_{k-2}$ takes on different values in different terms of the above formula).

Since $X_k$ commutes with each $X_\ell$, $1\leq \ell <k$, then $X_k$ acts on $\mathbb F_q^n$ by acting on each isotypic summand, which by induction are $\mathbb F_{q^{i_1i_2\cdots i_{k-2}}}[X_{k-1}]$-modules, or $\mathbb F_{q^{i_1i_2\cdots i_{k-1}}}$-vector spaces of dimension $r_{k-1}$. There are 
\[ \sum_{\Lambda_k\vDash r_{k-1}} \psi_{\Lambda_k}(q^{i_1i_2\cdots i_{k-1}}) \]

conjugacy classes of semisimple elements of $GL_{r_{k-1}}(\mathbb F_{q^{i_1i_2\cdots i_{k-1}}})$. By (\ref{eq: centX}), each conjugacy class has
\[ \frac{| GL_{r_{k-1}}(\mathbb F_{q^{i_1i_2\cdots i_{k-1}}})|}{\prod\limits_{(i_k,r_k)\in\Lambda_k}|GL_{r_k}(\mathbb F_{q^{i_1i_2\cdots i_k}})|}\] elements, for a total of 
\[ F(r_{k-1})=\sum_{\Lambda_k\vDash r_{k-1}} \psi_{\Lambda_k}(q^{i_1i_2\cdots i_{k-1}}) \frac{| GL_{r_{k-1}}(\mathbb F_{q^{i_1i_2\cdots i_{k-1}}})|}{\prod\limits_{(i_k,r_k)\in\Lambda_k}|GL_{r_k}(\mathbb F_{q^{i_1i_2\cdots i_k}})|}\]

\noindent possible $X_k$-actions on $\mathbb F_{q^{i_1i_2\cdots i_{k-1}}}^{r_{k-1}}$. So by induction, the number of pairwise-commuting $k$-tuples of semisimple matrices in $GL_n(\mathbb F_q)$ is
\begin{align*}|GL_n(q)|\sum_{\Lambda_1\vDash n}\psi_{\Lambda_1}(q)\prod_{(i_1,r_1)\in\Lambda_1}\sum_{\Lambda_2\vDash r_1}\psi_{\Lambda_2}(q^{i_1})\prod_{(i_2,r_2)\in\Lambda_2}\cdots \\
 \cdots 
\sum_{\Lambda_{k-1}\vDash r_{k-2}}\psi_{\Lambda_{k-1}}(q^{i_1\cdots i_{k-2}})\prod_{(i_{k-1},r_{k-1})\in\Lambda_{k-1}}\frac{1}{|GL_{r_{k-1}}(q^{i_1\cdots i_{k-1}})|}F(r_{k-1})
\text{.}
\end{align*}

Canceling factors of $|GL_{r_{k-1}}(q^{i_1\cdots i_{k-1}})|$, the above formula simplifies to the statement of the theorem. \end{proof}
\begin{obs} From the proof, we see that the formula in Theorem~\ref{prank0} is polynomial in $q$. This polynomial depends on $n$ and $k$, but not on the characteristic. \end{obs}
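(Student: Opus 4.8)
The plan is to read off polynomiality directly from the closed-form expression produced in the proof of Theorem~\ref{prank0}, by checking that every elementary building block of that formula is a polynomial in $q$ and then verifying that the (finitely many) arithmetic operations assembling them preserve polynomiality. The only subtlety is that the formula involves \emph{division} by orders of general linear groups, so I cannot merely invoke ``polynomials are closed under sums and products''; I must argue that the divisions that occur are exact at the level of rational functions in $q$. Concretely, the expression in Theorem~\ref{prank0} is built from three kinds of pieces: the factors $|GL_m(\mathbb F_{q^d})|$, which equal $q^{d\binom{m}{2}}\prod_{j=1}^m(q^{dj}-1)$ and hence are honest polynomials in $q$; the factors $\psi_{\Lambda}(q^d)$, which are polynomials in $q^d$ (a product of binomial-type expressions in the counts $\frac{1}{i}\sum_{k\mid i}\mu(k)q^{i/k}$ of monic irreducibles) and therefore polynomials in $q$; and the finitely many nested sums indexed by types $\Lambda_1\vDash n$, $\Lambda_2\vDash r_1$, and so on. Since $n$ and $k$ are fixed, all these index sets are finite, so the whole expression is a finite $\mathbb Q$-linear combination of finite products of the polynomial pieces, divided by finite products of the $|GL_{r_k}(\mathbb F_{q^{i_1\cdots i_k}})|$.

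To handle the divisions, I would not try to clear denominators and factor; instead I would recall what the formula \emph{counts}. For each fixed prime power $q=p^f$, the left-hand side is literally the cardinality of the set of pairwise-commuting semisimple $k$-tuples in $GL_n(\mathbb F_q)$, which is a nonnegative integer; the proof of Theorem~\ref{prank0} shows the displayed rational expression equals that integer for every prime power $q$. Thus the rational function $R(q)\in\mathbb Q(q)$ given by the formula takes integer (in particular, well-defined, finite) values at infinitely many positive integers $q$ — all prime powers. A rational function that is defined and agrees with some function at infinitely many points in a Zariski-dense set is determined, but more to the point: write $R(q)=A(q)/B(q)$ in lowest terms with $A,B\in\mathbb Z[q]$; since $R$ is finite at every prime power and there are infinitely many prime powers, $B$ can have only finitely many roots, which is automatic, but the real claim is that $B$ is constant. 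This follows because $|B(q)|$ divides (up to the bounded denominators coming from the $\psi_\Lambda$) the value $|A(q)|$ relative to an integer value — here it is cleanest to argue as in \cite{GV}/\cite{HRV}: the formula, after the cancellations performed explicitly in the proof, is manifestly a \emph{polynomial} because at each inductive step the factor $|GL_{r_{k-1}}(q^{i_1\cdots i_{k-1}})|$ introduced in $F(r_{k-1})$ cancels against the denominator carried from the previous stage, so no unmatched $GL$-denominator survives except the ones inside the innermost $\psi_{\Lambda_k}/\prod|GL_{r_k}(\cdots)|$. Hence the plan's first move is to re-examine that cancellation in the proof and observe that the \emph{only} denominators left in the final formula of Theorem~\ref{prank0} are the $\prod_{(i_k,r_k)\in\Lambda_k}|GL_{r_k}(q^{i_1\cdots i_k})|$ sitting under the innermost sum, together with the rational coefficients hidden inside each $\psi_\Lambda$.

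It then remains to explain why even those surviving denominators are harmless. For the innermost $GL$-denominators: the numerator $\psi_{\Lambda_k}(q^{i_1\cdots i_{k-1}})$ is, by the discussion of $\psi_\Lambda$ in the paper, a product $\prod_{(i_k,r_k)\in\Lambda_k}$ (roughly) of $\binom{N_{i_k}}{\text{parts of }\lambda^{i_k}}$-type factors, where $N_{i_k}$ is the number of monic irreducibles of degree $i_k$ over $\mathbb F_{q^{i_1\cdots i_{k-1}}}$; grouping the numerator factor-by-factor against the corresponding $|GL_{r_k}(q^{i_1\cdots i_k})|$ one sees each ratio is exactly of the shape $\frac{(\text{falling factorial in an integer-valued polynomial})}{|GL_{r_k}|}$, which is the probability-type quantity already recognized in equation~(\ref{eq: orbit}) as counting semisimple conjugacy classes — an \emph{integer} for each $q$ — times a polynomial; so each such ratio is an integer-valued (indeed polynomial, after the $q^{\binom{r_k}{2}}$-type factors) rational function. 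For the $\psi_\Lambda$ coefficients: these are fixed rationals independent of $q$, and they enter the final formula only through the outermost $\psi_{\Lambda_1}(q)$ (a genuine polynomial, no division) and through the innermost ratios just discussed; the nested $\psi_{\Lambda_j}(q^{i_1\cdots i_{j-1}})$ for $1<j<k$ multiply \emph{integer-valued} quantities and are themselves polynomials in $q$ with rational coefficients, and the product of a polynomial-in-$q$ with an integer-valued-polynomial-in-$q$ is again a polynomial in $q$ (a rational function finite at all prime powers whose product with itself lands in $\mathbb Z$ for all prime powers must have polynomial — not merely rational — shape, by the lowest-terms argument above). Assembling: $R(q)$ is a finite $\mathbb Q$-combination of products of polynomials in $q$, hence $R(q)\in\mathbb Q[q]$. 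Finally, independence of the characteristic is immediate and requires no further work: the number of monic irreducibles of degree $i$ over $\mathbb F_{q^d}$, the orders $|GL_m(\mathbb F_{q^d})|$, the definition of $\psi_\Lambda$, and the combinatorial index sets $\Lambda\vDash n$ all depend only on $q$ (and on $n$, $k$) as formal expressions — nowhere in the derivation did we use that $q$ is a power of a \emph{particular} prime, only that $q$ is a prime power so that the cardinality interpretation holds — so the resulting polynomial in $\mathbb Q[q]$ is one and the same for all $p$.

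\textbf{Main obstacle.} The genuinely non-formal point is the passage from ``rational function in $q$'' to ``polynomial in $q$'': the raw formula in Theorem~\ref{prank0} has $GL$-orders in denominators, and one must justify exactness. I expect the cleanest route is the one sketched above — track the cancellation already visible in the inductive step of the proof so that only the innermost $\psi_{\Lambda_k}/\prod|GL_{r_k}|$ denominators remain, then identify each such ratio with the integer-valued class count of~(\ref{eq: orbit}) — rather than any brute-force factorization; but making the ``integer-valued at all prime powers $\Rightarrow$ polynomial'' step airtight (e.g.\ via the lowest-terms $A/B$ argument, or by exhibiting the cancellation explicitly type-by-type) is where the actual content lies.
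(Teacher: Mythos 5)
Your overall architecture (finite index sets, building blocks that are universal polynomials in $q$, the characteristic entering only through $q$) is the right reading of the Observation, and your independence-of-characteristic remark is fine. The problem is the step you yourself identify as the content: exactness of the divisions. Your claim that after the cancellations the formula is ``manifestly a polynomial'' is backwards -- the cancelled form displayed in Theorem~\ref{prank0} is precisely the form in which polynomiality is \emph{not} visible, because the innermost denominators $\prod_{(i_k,r_k)\in\Lambda_k}|GL_{r_k}(\mathbb F_{q^{i_1\cdots i_k}})|$ survive. And your proposed repair -- pairing $\psi_{\Lambda_k}(q^{i_1\cdots i_{k-1}})$ factor-by-factor against those orders to obtain ``integer-valued'' ratios -- fails: such a ratio is a count of characteristic polynomials divided by a centralizer \emph{order}, not multiplied by a centralizer \emph{index}. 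This is visible in the paper's own $k=n=2$ computation, where the term $\psi_{\Lambda_b}(q)/|GL_2(\mathbb F_q)|=(q-1)/\bigl((q^2-1)(q^2-q)\bigr)$ occurs; it is neither an integer nor a polynomial. Equation~(\ref{eq: orbit}) is an integer only because the class count there is multiplied by the index $|GL_{r_j}|/\prod|GL_s|$, which is exactly the factor that got cancelled to produce the compact formula.

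The intended argument is to read the proof in its \emph{un-cancelled} form, e.g.\ (\ref{eq: 2}) and the inductive step before the $|GL_{r_{k-1}}|$'s are removed: there each term is a product of $\psi$-values (universal polynomials in $q$ with rational coefficients, finite in number once $n$ and $k$ are fixed) and centralizer indices $|GL_r(\mathbb F_{q^i})|/\prod_{(\ell,s)\in N}|GL_s(\mathbb F_{q^{i\ell}})|$ as in (\ref{eq: centX}); one then needs the true but not-free fact that each such index is itself a polynomial in $q$ (e.g.\ by comparing multiplicities of cyclotomic factors of $|GL_n|$ and of the centralizer order, or because it is the cardinality of a semisimple conjugacy class), a fact you never establish. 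Alternatively, your ``global'' route does work and avoids all cancellation bookkeeping: the displayed expression is one fixed rational function of $q$ over $\mathbb Q$, independent of $p$, whose value at every prime power is the integer counted by Theorem~\ref{prank0}; and a rational function over $\mathbb Q$ taking integer values on an infinite unbounded set of positive integers is a polynomial (write $A=QB+S$ with $\deg S<\deg B$, clear the denominators of $Q$ by an integer $D$, and note $DS(q)/B(q)$ is an integer of absolute value less than $1$ for large $q$, forcing $S=0$). But as written you assert ``$B$ is constant'' without an argument and then fall back on the incorrect cancellation claim, so the one nontrivial point of the Observation is not actually established in your proposal.
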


\begin{rem} The polynomial behavior of the formula in Theorem~\ref{prank0} is strictly a same-characteristic phenomenon, which fails even in the simplest case in characteristic $\ell\neq p$. Consider the set $\Hom(\hat{\mathbb Z}^\prime,\mathbb G_m(\mathbb F_{\ell^n}))$. When $\ell\not\equiv 1\mod p$, there are infinitely many $n$ for which $\ell^n\not\equiv 1\mod p$, and for such $n$ $\#\Hom(\hat{\mathbb Z}^\prime,\mathbb G_m(\mathbb F_{\ell^n}))=\ell^n-1$. But for those $n$ such that $\ell^n\equiv 1\mod p$, $\#\Hom(\hat{\mathbb Z}^\prime,\mathbb G_m(\mathbb F_{\ell^n}))\lneq\ell^n-1$. When $\ell\equiv 1\mod p$, we may write $\ell=mp^s+1$ with $p\nmid m$. Then for those $n$ such that $p\nmid n$, the $p$-adic valuation of $\ell^n-1$ is $s$, and $\#\Hom(\hat{\mathbb Z}^\prime,\mathbb G_m(\mathbb F_{\ell^n}))=(\ell^n-1)/p^s$. When $p\mid n$, $\#\Hom(\hat{\mathbb Z}^\prime,\mathbb G_m(\mathbb F_{\ell^n}))\lneq(\ell^n-1)/p^s$.
\end{rem}

The next theorem computes $\# \Hom(\pi_1(A_g),GL_n(\mathbb F_q))$ when the $p$-rank of $A_g$ is $1$.

\begin{thm}\label{prank1} The number of ordered $k$-tuples of invertible, pairwise-commuting matrices $(X_1,\ldots, X_{k-1},Y)$ such that the $X_i$ are all semisimple is 
\begin{align*}|GL_n(q)|\sum_{\Lambda_1\vDash n}\psi_{\Lambda_1}(q)\prod_{(i_1,r_1)\in\Lambda_1}\sum_{\Lambda_2\vDash r_1}\psi_{\Lambda_2}(q^{i_1})\prod_{(i_2,r_2)\in\Lambda_2}\cdots \\
 \cdots \sum_{\Lambda_{k-2}\vDash r_{k-3}}\psi_{\Lambda_{k-2}}(q^{i_1i_2\cdots i_{k-3}})\prod_{(i_{k-2},r_{k-2})\in\Lambda_{k-2}}
(q^{i_1\cdots i_{k-2}}-1)(q^{i_1\cdots i_{k-2}})^{r_{k-2}-1}.
\end{align*}
\end{thm}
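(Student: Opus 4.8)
The plan is to run the same inductive strategy used in the proof of Theorem~\ref{prank0}, changing only the final step, where the semisimplicity hypothesis on the last matrix is dropped. First I would follow the proof of Theorem~\ref{prank0} verbatim through the first $k-2$ matrices $X_1,\ldots,X_{k-2}$: choosing commuting semisimple $X_1,\ldots,X_{k-2}$ decomposes $\mathbb F_q^n$ into isotypic components, and on each such component $X_{k-2}$ has a characteristic polynomial of some type $\Lambda_{k-2}\vDash r_{k-3}$, so that the component becomes, after extension of scalars, an $\mathbb F_{q^{i_1\cdots i_{k-2}}}$-vector space of dimension $r_{k-2}$. This produces exactly the leading part of the claimed formula, namely the nested sums and products up through $\sum_{\Lambda_{k-2}\vDash r_{k-3}}\psi_{\Lambda_{k-2}}(q^{i_1i_2\cdots i_{k-3}})\prod_{(i_{k-2},r_{k-2})\in\Lambda_{k-2}}(\,\cdots\,)$, with the centralizer bookkeeping from (\ref{eq: centX}) producing the $|GL_{r_{k-1}}|$-type denominators that cancel as in the earlier proof.

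The one genuinely new ingredient is the last step: given the $\mathbb F_{q^{i_1\cdots i_{k-2}}}$-vector space $V$ of dimension $r_{k-2}$ on which $X_1,\ldots,X_{k-2}$ have been fixed, I need to count the matrices $Y$ commuting with all of them, i.e.\ the $\mathbb F_{q^{i_1\cdots i_{k-2}}}$-linear automorphisms of $V$ — but now with \emph{no} semisimplicity restriction. That count is simply $|GL_{r_{k-2}}(q^{i_1\cdots i_{k-2}})|$. Writing $Q = q^{i_1\cdots i_{k-2}}$, we have
\[
|GL_{r_{k-2}}(Q)| = \prod_{j=0}^{r_{k-2}-1}(Q^{r_{k-2}} - Q^{j}) = Q^{\binom{r_{k-2}}{2}}\prod_{j=1}^{r_{k-2}}(Q^{j}-1),
\]
and the ratio of this to the corresponding $|GL_{r_{k-2}}(Q)|$ denominator that appeared in the $(k-1)$-tuple inductive hypothesis is $(Q-1)Q^{r_{k-2}-1}$; this is precisely the factor $(q^{i_1\cdots i_{k-2}}-1)(q^{i_1\cdots i_{k-2}})^{r_{k-2}-1}$ in the statement. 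So the bulk of the work is just carefully setting up the induction so that the denominators from the centralizer computation for $X_{k-2}$ cancel correctly against $|GL_{r_{k-2}}(Q)|$, leaving this clean closed form; I would state the base case $k=2$ (where the formula reads $|GL_n(q)|\sum_{\Lambda_1\vDash n}\psi_{\Lambda_1}(q)\,(\cdots)$ with the product over $(i_1,r_1)\in\Lambda_1$ of $(q^{i_1}-1)(q^{i_1})^{r_1-1}$, corresponding to counting arbitrary $Y$ commuting with a semisimple $X_1$) and then invoke the $k$-step induction.

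I expect the main obstacle to be purely bookkeeping rather than conceptual: aligning the indexing conventions of the nested sums so that the "$|GL_{r_{k-1}}|$ in the denominator" from the $(k-1)$-tuple formula is in fact the group one is summing $Y$-actions over, so that the cancellation is legitimate term-by-term. The subtle point worth emphasizing is that when one builds up the module structure through $X_1,\ldots,X_{k-2}$, the centralizer of $X_{k-2}$ on a given isotypic piece is already a product of general linear groups over extension fields (by (\ref{eq: centX})), and it is exactly \emph{this} product — not $GL_n$ — on which the final unrestricted matrix $Y$ ranges. Once that identification is made explicit, the formula falls out by the same cancellation used at the end of the proof of Theorem~\ref{prank0}, and the observation that the result is polynomial in $q$ (independent of the characteristic) carries over unchanged.
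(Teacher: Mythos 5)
Your proposed argument is close in spirit to the paper's, but as written it has a genuine gap that would make the proof fail, and it conflates two different places the induction might stop.

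The tuple being counted is $(X_1,\ldots,X_{k-1},Y)$: there are $k-1$ semisimple matrices and one unrestricted $Y$. You say you would "follow the proof of Theorem~\ref{prank0} verbatim through the first $k-2$ matrices," arriving at isotypic summands that are $\mathbb F_{Q}$-vector spaces of dimension $r_{k-2}$ with $Q=q^{i_1\cdots i_{k-2}}$, and then count the unrestricted $Y$ on each summand, getting $|GL_{r_{k-2}}(Q)|$. But this omits $X_{k-1}$ entirely: after fixing $X_1,\ldots,X_{k-2}$ you still must choose a semisimple $X_{k-1}$ \emph{and} an arbitrary $Y$. The paper handles both at once: for each conjugacy class $\mathcal O$ of semisimple matrices in $GL_{r_{k-2}}(\mathbb F_Q)$, orbit--stabilizer gives exactly $|GL_{r_{k-2}}(\mathbb F_Q)|$ commuting pairs $(A,B)$ with $A\in\mathcal O$, and there are $Q^{r_{k-2}-1}(Q-1)$ such classes; the $|GL_{r_{k-2}}(\mathbb F_Q)|$ then cancels the centralizer denominator. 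Your sentence that ``the ratio of this to the corresponding $|GL_{r_{k-2}}(Q)|$ denominator $\ldots$ is $(Q-1)Q^{r_{k-2}-1}$'' is arithmetically false --- the ratio of a quantity to itself is $1$ --- and what you are actually missing is the factor $Q^{r_{k-2}-1}(Q-1)$ coming from the count of characteristic polynomials of $X_{k-1}$, which never enters your argument. Relatedly, the ``$k=2$'' base case you write down, $|GL_n(q)|\sum_{\Lambda_1\vDash n}\psi_{\Lambda_1}(q)\prod_{(i_1,r_1)\in\Lambda_1}(q^{i_1}-1)(q^{i_1})^{r_1-1}$, is in fact the $k=3$ formula; the $k=2$ count of pairs $(X_1,Y)$ with $X_1$ semisimple is simply $|GL_n(q)|\,q^{n-1}(q-1)$.

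There is a salvageable variant of your idea that differs mildly from the paper's: instead start from the full $(k-1)$-tuple formula of Theorem~\ref{prank0} (so $X_{k-1}$ is already counted), observe that counting an arbitrary $Y$ on each isotypic summand gives exactly $\prod_{(i_{k-1},r_{k-1})\in\Lambda_{k-1}}|GL_{r_{k-1}}(q^{i_1\cdots i_{k-1}})|$, which cancels the final denominator, and then collapse the now-unweighted inner sum via $\sum_{\Lambda_{k-1}\vDash r_{k-2}}\psi_{\Lambda_{k-1}}(Q)=Q^{r_{k-2}-1}(Q-1)$ (the total number of monic polynomials of degree $r_{k-2}$ over $\mathbb F_Q$ with nonzero constant term). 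This last collapsing identity is the step your proposal is missing, and without it the claimed closed form does not emerge. The paper avoids it by counting the pair $(X_{k-1},Y)$ jointly against the $(k-2)$-tuple formula; either route works, but yours as written does not close the argument.
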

\begin{proof} As in the inductive step of the theorem, we assume the number of pairwise commuting $(k-2)$-tuples of semisimple elements of $GL_n(\mathbb F_q)$ is
\begin{align*}|GL_n(q)|\sum_{\Lambda_1\vDash n}\psi_{\Lambda_1}(q)\prod_{(i_1,r_1)\in\Lambda_1}\sum_{\Lambda_2\vDash r_1}\psi_{\Lambda_2}(q^{i_1})\prod_{(i_2,r_2)\in\Lambda_2}\cdots \\
 \cdots \sum_{\Lambda_{k-3}\vDash r_{k-4}}\psi_{\Lambda_{k-3}}(q^{i_1i_2\cdots i_{k-4}})\prod_{(i_{k-3},r_{k-3})\in\Lambda_{k-3}}
\sum_{\Lambda_{k-2}\vDash r_{k-3}}\frac{\psi_{\Lambda_{k-2}}(q^{i_1i_2\cdots i_{k-3}})}{\prod\limits_{(i_{k-2},r_{k-2})\in\Lambda_{k-2}}|GL_{r_{k-2}}(q^{i_1i_2\cdots i_{k-2}})|}\text{.}
\end{align*}

As before, for each possible action of $X_1,\ldots,X_{k-2}$ on $\mathbb F_q^n$, the action of $X_{k-2}$ on an isotypic summands of the $\mathbb F_q[X_1,\ldots,X_{k-3}]$-module $\mathbb F_q^n$ has characteristic polynomials of type $\Lambda_{k-2}$ as above when these isotypic summands are viewed as $\mathbb F_{q^{i_1i_2\cdots i_{k-3}}}$-vector spaces.

Thus it suffices to count the number of pairs $X_{k-1},Y$ which act on each isotypic summand of the $\mathbb F_q[X_1,\ldots, X_{k-2}]$-module $\mathbb F^n_q$. By assumption, these summands are of the form $\mathbb F_{q^{i_1\cdots i_{k-2}}}^{r_{k-2}}$. 

 Let $\mathcal{O}$ be the orbit, under the conjugation action in $GL_{r_{k-2}}(\mathbb F_{q^{i_1\cdots i_{k-2}}})$, of a matrix of order prime-to-$p$. Given any $X_{k-1}$, we will denote by $A$ its restriction to the isotypic summand in question, and similarly $B$ will denote the restriction of $Y$. For any $A\in\mathcal O$, the number of nonsingular matrices $B$ that commute with $A$ is the order of the centralizer of $A$. The number of elements in $\mathcal O$ is the index of the centralizer of $A$. Thus there are precisely $|GL_{r_{k-2}}(\mathbb F_{q^{i_1\cdots i_{k-2}}})|$ pairs $(A,B)$ such that $A$ and $B$ commute and $A\in\mathcal O$.
 So to compute the number of pairs $(A,B)$ with $[A,B]=1$ and $|A|$ prime to $p$, we need only count the number of possible conjugacy classes $A$. These are in bijection with characteristic polynomials, of which there are $({q^{i_1\cdots i_{k-2}}})^{r_{k-2}-1}({q^{i_1\cdots i_{k-2}}}-1)$.

Thus the number of pairwise commuting $k$-tuples of invertible $n\times n$ matrices\newline \noindent $(X_1,\ldots, X_{k-1},Y)$, of which all but possibly the last have order prime to $p$, is
\begin{align*}|GL_n(q)|\sum_{\Lambda_1\vDash n}\psi_{\Lambda_1}(q)\prod_{(i_1,r_1)\in\Lambda_1}\sum_{\Lambda_2\vDash r_1}\psi_{\Lambda_2}(q^{i_1})\prod_{(i_2,r_2)\in\Lambda_2}\cdots \\
 \cdots \sum_{\Lambda_{k-3}\vDash r_{k-4}}\psi_{\Lambda_{k-3}}(q^{i_1\cdots i_{k-4}})\prod_{(i_{k-3},r_{k-3})\in\Lambda_{k-3}}
\sum_{\Lambda_{k-2}\vDash r_{k-3}}\frac{\psi_{\Lambda_{k-2}}(q^{i_1\cdots i_{k-3}})\#(A,B)}{\prod\limits_{(i_{k-2},r_{k-2})\in\Lambda_{k-2}}|GL_{r_{k-2}}(q^{i_1\cdots i_{k-2}})|}\text{,}
\end{align*}
where 
\[\#(A,B)=\prod_{(i_{k-2},r_{k-2})\in\Lambda_{k-2}}|GL_{r_{k-2}}(\mathbb F_{q^{}i_1\cdots i_{k-2}})|({q^{i_1\cdots i_{k-2}}})^{r_{k-2}-1}({q^{i_1\cdots i_{k-2}}}-1)\text{.}\]
 After cancellation we arrive at the statement of the theorem. \end{proof}

\begin{thm}\label{conj} The number of conjugacy classes of $k$-tuples of pairwise-commuting, invertible, semisimple matrices is
\begin{align*}\#\{(X_1,\ldots, X_k)\}/\sim=\\ \sum_{\Lambda_1\vDash n}\psi_{\Lambda_1}(q)\prod_{(i_1,r_1)\in\Lambda_1}\sum_{\Lambda_2\vDash r_1}\psi_{\Lambda_2}(q^{i_1})\prod_{(i_2,r_2)\in\Lambda_2}\cdots \\
 \cdots 
\sum_{\Lambda_{k-1}\vDash r_{k-2}}\psi_{\Lambda_{k-1}}(q^{i_1i_2\cdots i_{k-2}})\prod\limits_{(i_{k-1},r_{k-1})\in\Lambda_{k-1}}({q^{i_1\cdots i_{k-1}}})^{r_{k-1}-1}({q^{i_1\cdots i_{k-1}}}-1)\text{.}
\end{align*}
\end{thm}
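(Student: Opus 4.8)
The plan is to derive Theorem~\ref{conj} from Theorem~\ref{prank1} rather than to run a fresh induction. Fix $k$ and $n$, and for a $k$-tuple $(X_1,\dots,X_k)$ of pairwise-commuting semisimple elements of $GL_n(\mathbb F_q)$ write $Z(X_1,\dots,X_k)$ for its simultaneous centralizer. Consider
\[
\Sigma \;=\; \sum_{(X_1,\dots,X_k)} \bigl|Z(X_1,\dots,X_k)\bigr|,
\]
the sum taken over all such tuples, and evaluate it in two ways. First, $GL_n(\mathbb F_q)$ acts on the set of these tuples by simultaneous conjugation; the stabilizer of $(X_1,\dots,X_k)$ is precisely $Z(X_1,\dots,X_k)$, so by orbit--stabilizer every orbit $\mathcal O$ has $|\mathcal O|=|GL_n(\mathbb F_q)|/|Z|$ with $|Z|$ constant on $\mathcal O$, and hence each orbit contributes exactly $|GL_n(\mathbb F_q)|$ to $\Sigma$. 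Thus $\Sigma=|GL_n(\mathbb F_q)|\cdot N$, where $N$ is the number of conjugacy classes of pairwise-commuting semisimple $k$-tuples --- the quantity we want. Second, for a fixed $(X_1,\dots,X_k)$ the invertible matrices $Y$ commuting with every $X_i$ are exactly the elements of $Z(X_1,\dots,X_k)$, so $\Sigma$ also counts the ordered $(k+1)$-tuples $(X_1,\dots,X_k,Y)$ of pairwise-commuting invertible matrices in which $X_1,\dots,X_k$ are semisimple and $Y$ is arbitrary.

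By Theorem~\ref{prank1} applied with $k+1$ in place of $k$, this second count equals
\[
|GL_n(q)|\sum_{\Lambda_1\vDash n}\psi_{\Lambda_1}(q)\prod_{(i_1,r_1)\in\Lambda_1}\sum_{\Lambda_2\vDash r_1}\psi_{\Lambda_2}(q^{i_1})\prod_{(i_2,r_2)\in\Lambda_2}\cdots\sum_{\Lambda_{k-1}\vDash r_{k-2}}\psi_{\Lambda_{k-1}}(q^{i_1\cdots i_{k-2}})\prod_{(i_{k-1},r_{k-1})\in\Lambda_{k-1}}(q^{i_1\cdots i_{k-1}}-1)(q^{i_1\cdots i_{k-1}})^{r_{k-1}-1}.
\]
Equating this with $|GL_n(\mathbb F_q)|\cdot N$ and dividing by $|GL_n(\mathbb F_q)|$ yields $N$, and since $(q^A-1)(q^A)^{r-1}=(q^A)^{r-1}(q^A-1)$ this is exactly the formula in the statement. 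The case $k=1$ should be handled separately: there the formula degenerates and the claim reduces to the fact that $GL_n(\mathbb F_q)$ has $q^{n-1}(q-1)$ conjugacy classes of semisimple elements, i.e.\ that there are $q^{n-1}(q-1)$ monic degree-$n$ polynomials over $\mathbb F_q$ with nonzero constant term, each of which --- using that $\mathbb F_q$ is perfect, so its irreducible polynomials are separable, so any such polynomial is realized by a direct sum of companion matrices --- is the characteristic polynomial of exactly one conjugacy class of semisimple matrices.

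The only real content is the double count of $\Sigma$; the rest is bookkeeping, the main nuisance being the index shift in Theorem~\ref{prank1} and the degenerate small-$k$ cases. If one preferred a proof not invoking Theorem~\ref{prank1}, the alternative is a direct induction. Let $C_k(n,q)$ be the number of conjugacy classes of commuting semisimple $k$-tuples in $GL_n(\mathbb F_q)$. Fixing first the conjugacy class of $X_1$ --- equivalently a type $\Lambda\vDash n$ together with a polynomial of that type, of which there are $\psi_\Lambda(q)$ --- the residual conjugation freedom is by the simultaneous centralizer $\prod_{(i,r)\in\Lambda}GL_r(\mathbb F_{q^i})$ computed in the proof of Theorem~\ref{prank0}, which acts block-diagonally on the $X_1$-isotypic decomposition of $\mathbb F_q^n$; hence the orbits of $(X_2,\dots,X_k)$ factor as a product of $GL_r(\mathbb F_{q^i})$-orbits over the isotypic blocks, giving $C_k(n,q)=\sum_{\Lambda\vDash n}\psi_\Lambda(q)\prod_{(i,r)\in\Lambda}C_{k-1}(r,q^i)$ with base case $C_1(m,Q)=Q^{m-1}(Q-1)$, which unwinds to the claimed formula. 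The delicate points in this route are that no Galois twisting enters the centralizer (so each block factor is genuinely $GL_r(\mathbb F_{q^i})$ acting $\mathbb F_{q^i}$-linearly), and that the restriction of a commuting semisimple tuple to an $X_1$-stable summand, regarded as an $\mathbb F_{q^i}$-vector space, is again a commuting semisimple tuple.
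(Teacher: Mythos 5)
Your main argument---summing $|Z(X_1,\dots,X_k)|$ over all tuples, evaluating that sum both as $|GL_n(\mathbb F_q)|$ times the number of conjugacy classes (orbit--stabilizer) and as the count of commuting $(k+1)$-tuples $(X_1,\dots,X_k,Y)$ with the first $k$ semisimple and $Y$ arbitrary invertible, then invoking Theorem~\ref{prank1} with $k+1$ in place of $k$---is precisely the paper's proof. Your separate treatment of the degenerate $k=1$ case and the sketched direct induction via $C_k(n,q)=\sum_{\Lambda}\psi_\Lambda(q)\prod_{(i,r)\in\Lambda}C_{k-1}(r,q^i)$ are both sound additions, though beyond what the paper records (the paper leaves the index shift in applying Theorem~\ref{prank1} implicit).
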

\begin{proof}
Observe that $Y\in GL_n(\mathbb F_q)$ stabilizes $(X_1,\ldots, X_k)$ if and only if it commutes with each $X_\ell$. Writing $Z(X_1,\ldots,X_k)$ for the order of the stabilizer of $(X_1,\ldots, X_k)$ and applying the orbit-stabilizer lemma,
\begin{align*} \#\{(X_1,\ldots, X_k)\}/\sim &=\sum_{\{(X_1,\ldots, X_k)\}} \frac{Z(X_1,\ldots,X_k)}{|GL_n(\mathbb F_q)|}\\
&=\frac{1}{|GL_n(\mathbb F_q)|}\sum_{\{(X_1,\ldots, X_k)\}}Z(X_1,\ldots,X_k)\\
&=\frac{1}{|GL_n(\mathbb F_q)|}\sum_{\{(X_1,\ldots, X_k,Y)\}}1
\end{align*}
where the last sum is taken over $k+1$-tuples pairwise commuting semisimple matrices $(X_1,\ldots, X_k,Y)$ with all $X_\ell$ semisimple. The value of this sum was computed in Theorem~\ref{prank1}.
\end{proof}

\section{The Space of Representations}\label{space}
In this section, $A_g$ will be an abelian variety with $p$-rank $0$, as before. However, all statements apply if the $p$-rank of $A_g$ is $1$, mutatis mutandi. All schemes below are reduced. Recall that a subset of affine (or projective) space is said to be \emph{constructible} if it may be expressed as a Boolean combination of Zariski-closed sets. \cite{Marker}

Fix $n$, and let $R=\Hom(\pi_1(A_g), GL_n(\bar{\mathbb F}_q))$. Recall that a representation of $\pi_1(A_g)$ is given by a choice of $2g$ invertible matrices $X^k$, $1\leq i\leq 2g$, such that these matrices are pairwise commuting and each have order relatively prime to $p$. Assigning a matrix $(X^k_{ij})_{1\leq i,j\leq n}$ to each generator, we may view $R$ as a subset of $\mathbb A^{2gn^2}$.

We will now show that for fixed $n$ and $g$, the set $R=\Hom(\pi_1(A_g), GL_n(\bar{\mathbb F}_q))$ of all representations  may be considered a constructible set.

\begin{prop}\label{constr} $R$ is constructible.\end{prop}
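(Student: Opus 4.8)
The plan is to realize $R$ as the image of a morphism from a Zariski-closed set, and then invoke Chevalley's theorem. First I would set up the ambient picture: a $2g$-tuple $(X^1,\dots,X^{2g})\in (\mathbb A^{n^2})^{2g}$ lies in $R$ precisely when (i) each $X^k$ is invertible, (ii) the $X^k$ pairwise commute, and (iii) each $X^k$ has order prime to $p$, equivalently each $X^k$ is semisimple (diagonalizable over $\bar{\mathbb F}_q$). Conditions (i) and (ii) are immediately constructible: invertibility is the nonvanishing of $\det$ (an open condition), and pairwise commutativity $X^kX^\ell=X^\ell X^k$ is a system of polynomial equations (a closed condition). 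So the only subtle point is condition (iii), semisimplicity, and the whole proposition reduces to showing that the locus of semisimple matrices in $GL_n$ — and more precisely the locus where all $2g$ matrices are simultaneously semisimple — is constructible.

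The key step is to handle simultaneous semisimplicity via a parametrization. A commuting tuple of semisimple matrices over $\bar{\mathbb F}_q$ is simultaneously diagonalizable; equivalently, $\bar{\mathbb F}_q^n$ decomposes as a direct sum of common eigenspaces. So I would stratify by the combinatorial type recording the common-eigenvalue data and, for a fixed stratum, write down a surjection onto it from a product of (a) a variety parametrizing the relevant flag/eigenspace decomposition (a closed subvariety of a product of Grassmannians, hence projective, hence with constructible image under any morphism) and (b) an affine space recording the eigenvalues. Concretely, $R$ is the union over unordered set partitions of $\{1,\dots,n\}$ (or over the types $\Lambda$ appearing in Section~\ref{count}) of the images of morphisms of the form "choose an ordered direct-sum decomposition of $\bar{\mathbb F}_q^n$, choose a scalar on each block for each of the $2g$ generators, build the corresponding tuple of matrices." Each such source is a locally closed set, the map is a morphism of varieties, and by Chevalley's theorem its image is constructible; a finite union of constructible sets is constructible.

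There is one cleaner alternative worth mentioning that avoids partitions: use the fact that a matrix $X$ is semisimple iff its minimal polynomial is squarefree, i.e. iff $\gcd(m_X, m_X') = 1$ where $m_X$ is the minimal polynomial — but $m_X$ is not a polynomial function of the entries, so this needs care; better is the criterion that $X\in GL_n$ is semisimple iff $X$ is conjugate to (its image under) a suitable power map, or iff the rank of the associated "radical" operator vanishes, which can be phrased as a constructible condition by spelling out ranks of matrices built polynomially from $X$. However the surjection-from-a-projective-times-affine approach is the most transparent and is the one I would write up.

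The main obstacle I anticipate is purely bookkeeping: making the parametrizing source genuinely a variety (or a finite union of locally closed sets) and checking that the natural "assemble the block-diagonal tuple" map really is a morphism and really is surjective onto the intended piece of $R$ — in particular being careful that one is not accidentally imposing that the eigenvalues be distinct across blocks, and that every simultaneously-semisimple commuting tuple is hit by exactly one stratum. Once the parametrization is set up correctly, the conclusion is immediate from Chevalley plus the closedness/openness of the commutativity and invertibility conditions.
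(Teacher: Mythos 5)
Your core idea---realize $R$ as the image of a morphism from an auxiliary variety and invoke Chevalley's theorem---is exactly the paper's alternative argument, and it is correct. But the stratification over partitions and the Grassmannian bookkeeping are unnecessary: since you are free to take non-distinct eigenvalues across blocks, the single ``finest'' stratum already surjects onto all of $R$. The paper makes this explicit by using one morphism
\[
GL_n \times \mathbb G_m^{2gn}\longrightarrow \mathbb A^{2gn^2},\qquad (C,X_1,\ldots,X_{2g})\mapsto (C^{-1}X_1C,\ldots,C^{-1}X_{2g}C),
\]
whose source is a single affine variety (an open subset of affine space), whose image is precisely the simultaneously diagonalizable invertible tuples, and whose image is constructible by Chevalley. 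This sidesteps the two points you flag as delicate (whether your parametrizing source is really a variety, and whether the strata overlap correctly), since there is just one source and one morphism. Also, a minor correction to your aside: the locus of direct-sum decompositions inside a product of Grassmannians is \emph{open}, not closed, so ``projective hence constructible image'' does not apply; you would need general Chevalley there anyway, as you do later. Finally, the paper's primary proof is a different but equivalent framing: it expresses simultaneous diagonalizability as a first-order formula (``there exists invertible $C$ with all $CX^kC^{-1}$ diagonal'') and appeals to quantifier elimination for algebraically closed fields to conclude that a definable set is constructible---model-theoretically the same content as Chevalley, and arguably cleaner since it avoids even writing down the morphism.
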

\begin{proof} The requirement that the matrices pairwise commute defines a closed affine subscheme. Invertibility is an open condition, so the variety of pairwise commuting, invertible matrices is quasi-affine. We may further specify that the matrices are (simultaneously) diagonalizable with the statement that there exists an invertible matrix $C$ (with coordinates $C^{i,j}$, $1\leq i,j\leq n$) so that $CX^kC^{-1}$ is diagonal for each $k$. This last statement requires an existential quantifier, and thus does not necessarily define a subscheme, but it is a first order statement in the sense of model theory, and hence the space of representations is a definable subset of $\mathbb A^{2gn^2}$. Since the theory of algebraically closed fields admits quantifier elimination, \cite{Marker} every definable set is in fact constructible, i.e. a Boolean combination of closed subschemes.\end{proof}

Alternatively, since we are considering simultaneously diagonalizable matrices, one may write the diagonalization of each $X_k$ as an element of a torus, and then define a morphism $GL_n \times \mathbb G_m^{2gn}\to \mathbb A^{2gn^2}$, $(C,X_1,\ldots, X_{2g})\mapsto (C^{-1}X_1 C,\ldots,C^{-1}X_{2g}C)$. By Chevalley's Theorem, \cite{Marker} the image of this morphism, which is equal to $R$, is constructible.\qed

Since $R$ is constructible, we may write $R=(R_1-R^\prime_1)\cup (R_2-R^\prime_2)\cup\ldots \cup (R_t-R^\prime_t)$, where each $R_i$ is a projective variety and $R^\prime_i$ is a (possibly empty) closed subscheme of $R_i$. Alternatively, since $R$ is contained in affine space, we may choose each $R_i$ and $R_i^\prime$ to be affine.

\begin{prop}\label{int} Let $P(T)$ be the polynomial such that $P(q)=\#R(\mathbb F_q)$, the number of $\mathbb F_q$-rational points of $R$. Then $P(T)\in \mathbb Z[T]$.\end{prop}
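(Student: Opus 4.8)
The plan is to show that $R$ admits a decomposition into locally closed pieces each of whose point-counting function is a polynomial in $q$ with integer coefficients, and then invoke the inclusion-exclusion structure of a constructible set to conclude. The cleanest route uses the stratification already implicit in the counting formula of Theorem~\ref{prank0}: stratify $R$ by the simultaneous-eigenvalue data, i.e. by the factorization types $\Lambda_1,\ldots,\Lambda_k$ (with $k=2g$) that arise in the inductive argument. Over a fixed choice of such types, the locus of tuples $(X_1,\ldots,X_{2g})$ inducing that combinatorial pattern is a single orbit-type stratum: a $GL_n$-homogeneous bundle over a finite set whose fiber is a product of tori and general linear groups over extension fields. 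The number of $\mathbb F_q$-points of such a stratum is exactly the corresponding summand in the formula of Theorem~\ref{prank0}, and each summand is $|GL_n(\mathbb F_q)| \cdot \psi_{\Lambda_1}(q)\cdots$ divided by orders of smaller general linear groups — but the \emph{whole} summand, after the cancellations performed in the proof, is a polynomial in $q$, and one checks it lies in $\mathbb Z[T]$ because it literally counts a set of $\mathbb F_q$-points for every prime power $q$ (a polynomial that takes integer values at all prime powers $p^m$, for every $p$, must have integer coefficients — this follows from the fact that such a polynomial takes integer values at infinitely many consecutive integers along several arithmetic progressions, hence is an integer combination of binomial coefficients, hence integral; more simply, a rational polynomial that is integer-valued at all sufficiently large integers has integer coefficients once we also know, as here, it is integer-valued at all of them).

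Concretely, first I would fix the combinatorial data and define $R_{(\Lambda_1,\ldots,\Lambda_k)}\subseteq R$ to be the constructible subset of tuples realizing that data; by the analysis in Section~\ref{countingsection} these strata are disjoint and cover $R$, and each is a single $GL_n$-orbit-type locus, hence locally closed in $\mathbb A^{2gn^2}$ and defined over the prime field. Second, I would observe that $\#R_{(\Lambda_1,\ldots,\Lambda_k)}(\mathbb F_q)$ equals the corresponding term of the sum in Theorem~\ref{prank0}; summing over all tuples of types recovers $P(q)=\#R(\mathbb F_q)$, and since this identity holds for all prime powers $q$ (all primes $p$, all powers), $P$ is determined and is the polynomial in the theorem. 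Third, I would argue integrality: $P(T)\in\mathbb Q[T]$ is already known, and $P(p^m)\in\mathbb Z$ for every prime $p$ and every $m\geq 1$; a rational polynomial taking integer values at every element of $\{p, p^2, p^3,\ldots\}$ for even a single prime $p$ need not have integer coefficients, but taking integer values at \emph{all} integers of the form $p^m$ as $p$ ranges over all primes pins down enough values (in particular all primes $p$, which form a set of integers of positive relative density in every congruence class by Dirichlet, and certainly contain arbitrarily long stretches modulo any fixed $d$) to force $P\in\mathbb Z[T]$ via the standard criterion that an integer-valued polynomial is a $\mathbb Z$-combination of $\binom{T}{j}$ and that $P$ being integer-valued on all primes forces all these coefficients integral. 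The slickest formulation: the leading-order comparison in the Newton forward-difference expansion $P(T)=\sum_j c_j\binom{T}{j}$ gives $c_j=\sum_i(-1)^{j-i}\binom{j}{i}P(i)$, and evaluating $P$ along prime arguments together with the known polynomiality lets one solve for the $c_j$ and see they are integers.

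An alternative, perhaps more in the spirit of the geometric section, is to avoid the explicit formula entirely: write $R=\bigsqcup_i (R_i - R_i')$ with $R_i, R_i'$ defined over $\mathbb F_p$ (or even over $\mathbb Z$, since all the defining equations — commutativity, the diagonalizability witness — have integer coefficients), so $\#R(\mathbb F_q)=\sum_i\bigl(\#R_i(\mathbb F_q)-\#R_i'(\mathbb F_q)\bigr)$; then it suffices to know that each $\#R_i(\mathbb F_q)$ is a polynomial in $q$ with integer coefficients. For the specific $R_i$ arising from the orbit-type stratification these are products and quotients of $|GL_m(\mathbb F_{q^d})|$-type factors that, by the cancellations already carried out, are honest $\mathbb Z[q]$-polynomials. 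I expect the main obstacle to be purely bookkeeping: making precise that the orbit-type strata of $R$ are exactly indexed by the tuples $(\Lambda_1,\ldots,\Lambda_k)$ appearing in Theorem~\ref{prank0} and that each stratum's point count is the matching summand — this requires re-running the inductive argument of that proof with attention to the fact that it is a stratification of an actual variety, not just a combinatorial identity. The integrality step itself is then a short, standard argument once one has a single polynomial agreeing with a genuine $\mathbb F_q$-point count for all prime powers $q$.
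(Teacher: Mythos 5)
Your integrality argument has a genuine gap. Knowing that a rational polynomial $P(T)$ takes integer values at every prime power $q$ — or even at every integer — does \emph{not} force $P\in\mathbb Z[T]$: the polynomial $\binom{T}{2}=\tfrac{1}{2}T(T-1)$ is integer-valued on all of $\mathbb Z$ and yet has non-integer coefficients. The forward-difference formula $c_j=\sum_i(-1)^{j-i}\binom{j}{i}P(i)$ shows only that $P$ is a $\mathbb Z$-combination of the $\binom{T}{j}$, which is strictly weaker than $P\in\mathbb Z[T]$. Your stratification strand does not close this hole either, because the orbit-type strata do \emph{not} have $\mathbb Z[q]$-polynomial point counts: already for $n=2$, $k=1$, the locus of semisimple matrices with two distinct $\mathbb F_q$-rational eigenvalues has $\tfrac{(q-1)(q-2)}{2}\cdot q(q+1)=\tfrac{1}{2}(q^4-2q^3-q^2+2q)$ points, and the culprit is that the $\psi_\Lambda(T)$ lie in $\mathbb Q[T]$ but generally not in $\mathbb Z[T]$, as the paper itself remarks just after Definition 2.

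The paper's proof (the Katz argument) supplies the missing geometric input. Writing $R=\bigsqcup_i(R_i-R_i')$ with $R_i,R_i'$ projective over some $\mathbb F_q$, the zeta function $Z(R,T)=\prod_i Z(R_i,T)/Z(R_i',T)$ is rational by Dwork--Weil; putting it in lowest terms $\prod_j(1-\alpha_jT)/\prod_k(1-\beta_kT)$ and comparing its logarithmic derivative against the expansion $\sum_i a_iq^i/(1-q^iT)$ coming from $\#R(\mathbb F_{q^m})=P(q^m)$ forces every $\alpha_j$ and $\beta_k$ to be a power of $q$, whence each $a_i$ is a (signed) multiplicity and therefore an integer. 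It is the \emph{polynomiality} of $m\mapsto\#R(\mathbb F_{q^m})$ combined with the \emph{rationality} of the zeta function of a constructible set — not integer-valuedness at prime powers alone — that pins the coefficients down, and this combination has no substitute in your proposed argument.
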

This argument is not new, and can be found in the appendix of \cite{HRV} by Katz.
 \begin{proof} 
Chose $q$ so that each $R_i$ and $R_i^\prime$ is defined over $\mathbb F_q$. Let\newline
\noindent $P(T)=a_0+a_1T+\ldots a_dT^d$. Then the zeta function $Z(R,T)$ is defined to be 
\[Z(R,T)=\exp(\sum_{n=0}^{\infty} R(\mathbb F_{q^n})T^n/n)=\exp(\sum_{n=0}^{\infty} P(q^n)T^n/n)\]
On the other hand, each $R_i$ and $R_i^\prime$ is a projective variety, each $Z(R_i,T)$ and $Z(R_i^\prime,T)$ is rational, and
\[Z(R,T)=\prod_{i} \frac{Z(R_i,T)}{Z(R^\prime_i,T)}.\]
Writing
\[Z(R,T)=\frac{(1-\alpha_1 T)\cdots (1-\alpha_r T)}{(1-\beta_1 T)\cdots(1-\beta_s T)}\]
in lowest terms and taking logs, we see that
\begin{align*}\sum_{j=1}^r\log(1-\alpha_jT)-\sum_{k=1}^s\log(1-\beta_kT)&= \log Z(R,T)\\
&=\sum_{n=0}^{\infty} P(q^n)T^n/n\\
&=\sum_{n=0}^{\infty}\sum_{i=0}^d a_iq^{in}T^{n}/n.
\end{align*}
And so, taking derivatives,
\[\sum_{k=1}^s\frac{\beta_k}{1-\beta_kT}-\sum_{j=1}^r\frac{\alpha_j}{1-\alpha_jT}= \frac{\partial}{\partial T}Z(R,T)=\sum_{n=0}^{\infty}\sum_{i=1}^d a_iq^{in}T^{n-1}=\sum_{i=1}^d \frac{a_i q^i}{1-q^iT}.\]
Comparing poles, each $\alpha_j$ and $\beta_k$ must be a power of $q$. Comparing numerators we see that for each $i$, either $a_i$ is the number of $k$ such that $\beta_k=q^i$, or $-a_i$ is the number of $j$ for which $\alpha_j=q^i$.
\end{proof}

From the above proof, we see that the leading coefficient of $P(T)$ is the multiplicity of $(1-\beta_sT)$ in the denominator of $Z(R,T)$. 

\begin{thm}[R. Guralnick, B. Sethuraman \cite{GS}]\label{GS} Let $V$ be the variety of commuting $k$-tuples of $n\times n$ matrices. If $\Delta\subset V$ denotes the union of the discriminant loci, i.e. the set on which at least one matrix has a repeated eigenvalue, then the closure of $R-\Delta$ is an irreducible component of $V$. The dimension of this component is $n^2+(k-1)n$.
\end{thm}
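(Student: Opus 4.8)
The plan is to work over $\bar{\mathbb F}_q$ (or any algebraically closed field, since the statement is purely geometric) and analyze the stratification of $V$ by the conjugacy type of a generic commuting tuple. Write $V \subset (\mathbb A^{n^2})^k$ for the commuting variety and let $\Delta \subset V$ be the locus where some $X_i$ has a repeated eigenvalue (a closed subset, being a union of vanishing loci of discriminants of characteristic polynomials). I would first show that $R - \Delta$ is irreducible and compute its dimension, and then argue that its closure is actually a \emph{component} of $V$, i.e.\ that it is not properly contained in any larger irreducible closed subset of $V$.

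The first step is to parametrize $R - \Delta$ by a surjection from an irreducible variety. A point of $R - \Delta$ is a $k$-tuple of pairwise commuting, simultaneously diagonalizable matrices, at least one of which — hence (after a small argument) all of which, on each common eigenline — has distinct eigenvalues in the relevant sense; more precisely, the common eigenspace decomposition of $\bar{\mathbb F}_q^n$ is into $n$ distinct lines. So consider the morphism
\[
GL_n \times (\mathbb G_m^n)^k \longrightarrow (\mathbb A^{n^2})^k, \qquad (C, D_1, \dots, D_k) \longmapsto (CD_1C^{-1}, \dots, CD_kC^{-1}),
\]
where $D_i$ runs over diagonal matrices. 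The image is exactly the closure of the locus of simultaneously diagonalizable commuting tuples; intersecting with the open condition that the eigenline decomposition is into $n$ distinct lines gives a dense open subset of $R - \Delta$, and in particular $R - \Delta$ is irreducible since it is dominated by the irreducible variety $GL_n \times (\mathbb G_m^n)^k$. For the dimension: the generic fiber of this map over a point of $R - \Delta$ is the set of $(C, D_1,\dots,D_k)$ producing a fixed tuple; $C$ is determined up to the stabilizer, which for a tuple with $n$ distinct common eigenlines is the diagonal torus (dimension $n$), and then the $D_i$ are determined. Hence $\dim (R - \Delta) = \dim GL_n + k \cdot n - n = n^2 + (k-1)n$.

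The second and crucial step is to show the closure $\overline{R - \Delta}$ is a \emph{maximal} irreducible closed subset of $V$, i.e.\ a component. The natural approach is to show that $R - \Delta$ is an open subset of $V$ (not merely locally closed), or at least that every point of $\overline{R-\Delta}$ has a neighborhood in $V$ of dimension $n^2 + (k-1)n$ meeting $\overline{R-\Delta}$ in a dense set. Equivalently: show that $\overline{R - \Delta}$ is not contained in any irreducible component of $V$ of strictly larger dimension. Here I would use a local/tangent-space argument at a \emph{generic} point $x \in R - \Delta$, i.e.\ a tuple of simultaneously diagonalizable matrices with $n$ distinct common eigenlines. The tangent space $T_x V$ is the space of $k$-tuples $(Y_1,\dots,Y_k)$ of matrices satisfying the linearized commutation relations $[X_i, Y_j] + [Y_i, X_j] = 0$; one computes, using that the $X_i$ are simultaneously diagonal with $n$ distinct joint eigencharacters, that $\dim T_x V = n^2 + (k-1)n$. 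Since this equals $\dim \overline{R - \Delta}$ and $x$ is a smooth point of $\overline{R-\Delta}$ lying in $V$, the point $x$ is a smooth point of $V$ lying on a unique component, which must therefore be $\overline{R - \Delta}$. Because $R - \Delta$ is dense in its closure and these generic points are dense in it, this shows $\overline{R - \Delta}$ is a component of $V$ of dimension $n^2 + (k-1)n$.

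The main obstacle is precisely this tangent-space computation and the attendant bookkeeping: one must diagonalize, decompose $Y_j = (Y_j^{(a,b)})$ into blocks indexed by the $n$ joint eigenlines, and check that the linearized relations force $Y_j$ to be diagonal off a controlled set of coordinates — carefully using that the joint eigencharacters $(\chi_1(a), \dots, \chi_k(a))_{a=1}^n$ are pairwise distinct as \emph{tuples}, even though individual $X_i$ need not have distinct eigenvalues. A subtlety to handle along the way is the relationship between "$R-\Delta$" (no $X_i$ has a repeated eigenvalue) and the genuinely needed open condition "the $X_i$ have $n$ distinct \emph{common} eigenlines"; the former implies the latter, which is all that is used, and I would remark that one could even enlarge $R - \Delta$ slightly and get the same component. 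Finally, one should note the argument is characteristic-independent, so it applies over $\bar{\mathbb F}_q$ as needed.
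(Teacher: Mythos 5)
The paper does not prove this theorem; it is cited from Guralnick and Sethuraman~\cite{GS}, so there is no in-paper argument to compare against. Your proposal is therefore a reconstruction, and it is correct, following what is essentially the standard tangent-space proof of this result. A few things to tighten. First, your parametrizing map uses $(\mathbb G_m^n)^k$ for the diagonal part; since $V$ is the variety of all commuting $k$-tuples, not only invertible ones, either replace this with $(\mathbb A^n)^k$, or observe explicitly that excising the non-invertible locus is a proper closed condition that does not change the closure. Second, the tangent-space computation you describe but leave implicit is the crux, and for the record it goes through cleanly: at a simultaneously diagonal point $x$ where each $X_i=\mathrm{diag}(\lambda_i^{(1)},\dots,\lambda_i^{(n)})$ has distinct diagonal entries, the linearized relations $[X_i,Y_j]=[X_j,Y_i]$ read
\[
(\lambda_i^{(a)}-\lambda_i^{(b)})(Y_j)_{ab}=(\lambda_j^{(a)}-\lambda_j^{(b)})(Y_i)_{ab}
\]
for all $i,j,a,b$. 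For each of the $n(n-1)$ ordered pairs $a\neq b$, fixing one index $i$ with $\lambda_i^{(a)}\neq\lambda_i^{(b)}$ expresses every $(Y_j)_{ab}$ as a multiple of $(Y_i)_{ab}$, so the $k$ off-diagonal parameters at that position collapse to a single degree of freedom (and one checks the remaining relations are then automatic); the $n$ diagonal positions are unconstrained. Hence $\dim T_xV=n(n-1)\cdot 1+n\cdot k=n^2+(k-1)n$, which matches your dimension count for $\overline{R-\Delta}$. The closing step is then the standard one you invoke: any irreducible component $V_0$ of $V$ containing $\overline{R-\Delta}$ passes through $x$, so $\dim V_0\le\dim T_xV=\dim\overline{R-\Delta}$, forcing $V_0=\overline{R-\Delta}$. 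Your remark that the weaker hypothesis of $n$ distinct \emph{joint} eigencharacters suffices is also correct and would only enlarge the open set whose closure one takes, not change the component.
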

\begin{cor}\label{degree} The degree of $P(T)$ is at least $n^2+(2g-1)n$.
\end{cor}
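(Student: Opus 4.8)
\textbf{Proof proposal for Corollary~\ref{degree}.}

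The plan is to exhibit an explicit subvariety of $R$ of the stated dimension, so that $P(T)$, counting $\mathbb{F}_q$-points of $R$, must have degree at least that large. The key point is that the representations of $\pi_1(A_g)$ are exactly the simultaneously diagonalizable $2g$-tuples of commuting invertible matrices, so $R$ contains (in fact equals, up to closure) the ``semisimple locus'' of the commuting variety $V$ of $2g$-tuples. First I would invoke Theorem~\ref{GS} (with $k=2g$): the closure of $V - \Delta$, the locus where every matrix has distinct eigenvalues, is an irreducible component of $V$ of dimension $n^2+(2g-1)n$. A tuple lying in $V-\Delta$ has a matrix with $n$ distinct eigenvalues, hence is regular semisimple and in particular semisimple, and any tuple commuting with a regular semisimple matrix is automatically simultaneously diagonalizable; therefore $V-\Delta \subseteq R$, and so $\overline{V-\Delta} \subseteq \overline{R}$ is an irreducible component of dimension exactly $n^2+(2g-1)n$.

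Next I would translate this dimension statement into a statement about the degree of $P$. Since $R$ is constructible (Proposition~\ref{constr}) and defined over a finite field, write $\overline{R}$ as a finite union of locally closed pieces and pass to the irreducible component $W := \overline{V-\Delta}$, which is one of these pieces' closures. The number of $\mathbb{F}_q$-points of an irreducible variety of dimension $d$ defined over $\mathbb{F}_q$ grows like $q^{d}$ (more precisely, $\#W(\mathbb{F}_{q}) = q^{d} + O(q^{d-1/2})$ by the Lang–Weil estimate, or — since we already know from Proposition~\ref{int} that the count is a polynomial with all roots powers of $q$ — simply that the leading term of the point-count polynomial of $W$ is $q^{d}$, the multiplicity being $1$ since $W$ is irreducible). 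Because $W$ is a component of $\overline{R}$ not contained in the union of the others, the polynomial $P(T)$ counting $R(\mathbb{F}_q)$ has a contribution of $q^{d}$ from $W$ that is not cancelled by inclusion–exclusion with lower-dimensional pieces; hence $\deg P \geq d = n^2+(2g-1)n$. One clean way to phrase the non-cancellation: on a dense open $U \subseteq W$ disjoint from all other components, $\#U(\mathbb{F}_q)$ already grows like $q^{d}$, and $U \subseteq R$, while $\#(R(\mathbb{F}_q)) \geq \#(U(\mathbb{F}_q))$, and all terms are eventually-positive polynomials in $q$, forcing $\deg P \geq d$.

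The main obstacle is the bookkeeping in the last step: making rigorous that the top-dimensional contribution survives, given that $R$ is only constructible and we have decomposed it into differences $R_i - R_i'$. The cleanest route avoids inclusion–exclusion subtleties entirely: it suffices to produce a single locally closed subvariety $Y \subseteq R$ of dimension $n^2+(2g-1)n$, because then $P(q) = \#R(\mathbb{F}_q) \geq \#Y(\mathbb{F}_q)$ for all $q$ (with $q$ a prime power where everything is defined), and the right-hand side is a polynomial-in-$q$ quantity — by Proposition~\ref{int} applied to $Y$, or by Lang–Weil — with leading term $q^{\dim Y}$; comparing two polynomials that satisfy $P(q) \geq Q(q)$ for infinitely many $q$ forces $\deg P \geq \deg Q$. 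For $Y$ I would take a dense open subset of the component $\overline{V-\Delta}$ that lies inside $V - \Delta$ itself, which is open in $V$ hence locally closed in $\mathbb{A}^{2gn^2}$, reduced, and of dimension $n^2+(2g-1)n$ by Theorem~\ref{GS}; and $V-\Delta \subseteq R$ as argued above. This sidesteps any need to understand how the various constructible pieces of $R$ overlap.
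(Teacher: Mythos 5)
Your proposal follows the paper's approach: invoke Theorem~\ref{GS} with $k=2g$ to get an irreducible component of the commuting variety $V$ of dimension $n^2+(2g-1)n$, recognize the regular-semisimple locus as the top-dimensional piece of $R$, and convert this into a lower bound on $\deg P$. Where you differ is in the final translation step. The paper decomposes $R$ with $R_1=\overline{V-\Delta}$, $R_1'=\Delta\cup D$, all other pieces inside $\Delta\cup D$, and then asserts (appealing to the proof of Proposition~\ref{int}) that $\deg P=\sup_i\dim R_i$. Your third-paragraph route --- exhibit a single irreducible locally closed $Y\subseteq R$ of the target dimension, observe $P(q)\geq\#Y(\mathbb F_q)$, and compare leading terms via Lang--Weil or polynomial growth --- cleanly sidesteps the inclusion--exclusion bookkeeping and is arguably more watertight than what the paper writes. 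So the two arguments are in the same spirit, with yours giving a tidier endgame.

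There is, however, one genuine error to fix. You assert $V-\Delta\subseteq R$, but this is false: membership in $R$ also requires the matrices to be invertible, and a tuple in $V-\Delta$ can well contain a matrix whose $n$ distinct eigenvalues include $0$. The paper introduces the determinant locus $D$ (where some matrix is singular) precisely for this reason, and the correct inclusion is $V-(\Delta\cup D)\subseteq R$. The repair is easy and does not change the dimension count: $D$ is closed and does not contain the generic point of $\overline{V-\Delta}$, so $V-(\Delta\cup D)$ is still a dense open of the irreducible component $\overline{V-\Delta}$, hence still of dimension $n^2+(2g-1)n$. Take your $Y$ inside $V-(\Delta\cup D)$ and the rest of your argument goes through unchanged.
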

\begin{proof} Let $D\subset V$ be the union of the determinant loci, the set on which at least one matrix is not invertible. Note that $V\supset R\supset V-(\Delta\cup D)$. Then we may decompose $R=(R_1-R^\prime_1)\cup (R_2-R^\prime_2)\cup\ldots \cup (R_t-R^\prime_t)$, where $R_1$ is the closure of $V-\Delta$, $R_1^\prime=\Delta\cup D$, and all $R_i\subset (\Delta\cup D)$ for all $i>1$. By theorem~\ref{GS}, the closure of $R_1-\Delta$ is irreducible, and is thus equal to the closure of $R_1-(\Delta\cup D)$ because $D$ is closed. From the proof of Proposition~\ref{int}, the degree of $P(T)$ is $\sup_{i} \dim R_i$, and $\dim R_1=n^2+(k-1)$.
\end{proof}

\begin{thm}[Gerstenhaber \cite{G}]\label{pairs} The variety of commuting pairs of matrices is irreducible.
\end{thm}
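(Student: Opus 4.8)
The plan is to single out one irreducible component of the commuting variety $\mathcal{C}=\{(A,B)\in M_n\times M_n : AB=BA\}$ having the expected dimension $n^2+n$, and then to show that every commuting pair lies in its closure. Work over an algebraically closed field $k$ (in our application $k=\bar{\mathbb F}_q$). Let $\pi\colon\mathcal{C}\to M_n$ be the first projection and let $M_n^{\circ}\subset M_n$ be the open dense locus of matrices with $n$ distinct eigenvalues. For $A\in M_n^{\circ}$ the fibre $\pi^{-1}(A)$ is the centralizer of $A$, which is the $n$-dimensional algebra $k[A]$; since the centralizer is the kernel of the linear operator $X\mapsto AX-XA$, and this operator has rank $\le n^2-n$ everywhere and rank exactly $n^2-n$ on $M_n^{\circ}$, these centralizers fit together into a rank-$n$ vector bundle over the smooth irreducible variety $M_n^{\circ}$. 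Hence $\pi^{-1}(M_n^{\circ})$ is irreducible of dimension $n^2+n$, and it is open in $\mathcal{C}$, so its closure $W$ (being the closure of a nonempty irreducible open subset) is an irreducible component of $\mathcal{C}$. It remains to prove $\mathcal{C}=W$, i.e.\ that every commuting pair $(A,B)$ is a limit of commuting pairs whose first coordinate has $n$ distinct eigenvalues.

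I would prove this by induction on $n$ (the case $n=1$ being trivial), organized by the structure of $k^n$ as a module over the finite-dimensional commutative algebra $R=k[A,B]\subseteq M_n(k)$. If $k^n=V_1\oplus V_2$ with each $V_i$ a nonzero $R$-submodule, then $A$ and $B$ are simultaneously block diagonal and restrict to commuting pairs $(A_i,B_i)$ on the $V_i$; by induction each $(A_i,B_i)$ is a limit of commuting pairs $(A_i',B_i')$ with $A_i'$ having $\dim V_i$ distinct eigenvalues, and replacing $A_2'$ by $A_2'+cI$ for a generic scalar $c$ (which still commutes with $B_2'$) makes the two spectra disjoint, so the direct sums $\bigl(A_1'\oplus(A_2'+cI),\,B_1'\oplus B_2'\bigr)$ are commuting pairs whose first coordinate has $n$ distinct eigenvalues and which degenerate to $(A,B)$; thus $(A,B)\in W$. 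If instead $k^n$ is indecomposable over $R$, then $R$ is a local Artinian $k$-algebra with residue field $k$, so $A=aI+A_0$ and $B=bI+B_0$ with $a,b\in k$ and $A_0,B_0$ in the (nilpotent) maximal ideal; since $W$ is invariant under translating either coordinate by a scalar matrix, we may assume $a=b=0$, reducing to the case where $A$ and $B$ are commuting \emph{nilpotent} matrices.

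The crux is therefore the case of a commuting pair of nilpotent matrices, where one must exhibit a one-parameter family $(A_t,B_t)$ in $\mathcal{C}$ with $(A_0,B_0)=(A,B)$ whose general member has $A_t$ with $n$ distinct eigenvalues. One natural attack is again inductive: pick a nonzero vector $v$ in the common kernel $\ker A\cap\ker B$ (nonempty because $\ker A$ is a nonzero $B$-stable subspace on which $B$ acts nilpotently), so $kv$ is an $R$-submodule; the induced pair $(\bar A,\bar B)$ on $k^n/kv$ is a commuting pair of size $n-1$, which by induction admits a degenerating family $(\bar A_t,\bar B_t)$ with $\bar A_t$ having $n-1$ distinct eigenvalues; one then lifts this to $k^n$, choosing the entries of $A_t$ and $B_t$ in the $v$-direction and the new eigenvalue on the line $kv$ so that $[A_t,B_t]=0$ is preserved. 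This lifting is where I expect the real work to lie: the equation $[A_t,B_t]=0$ imposes a linear condition relating the two off-diagonal vectors, and one must ensure this condition is solvable by vectors specializing to the original data at $t=0$ while keeping the new eigenvalue generic --- possibly only after choosing the degeneration of $(\bar A,\bar B)$ inside a suitably restricted family. This is essentially the content of Gerstenhaber's treatment of nilpotent pairs via the notion of dominance \cite{G}; it genuinely uses that we have a single commutator, since the analogous irreducibility statement for $k\ge 4$ commuting matrices is false once $n$ is large.

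As a cross-check on the dimension, stratify $M_n$ by similarity type: on the stratum of matrices with exactly $r$ distinct eigenvalues and a fixed Jordan shape, the fibre of $\pi$ together with the stratum contribute dimension $n^2+r\le n^2+n$, with equality only on $M_n^{\circ}$. Consequently any irreducible component of $\mathcal{C}$ other than $W$ must lie in $\pi^{-1}(M_n\setminus M_n^{\circ})$ and hence have dimension at most $n^2+n-1$; it would thus suffice to know that $\mathcal{C}$ is equidimensional of dimension $n^2+n$. But establishing that lower bound on the local dimension of $\mathcal{C}$ is essentially equivalent to the deformation statement above, so this reformulation does not evade the main obstacle.
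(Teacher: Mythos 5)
The paper does not actually prove Theorem~\ref{pairs}; it states it as an imported result, citing Gerstenhaber \cite{G} as a black box. So there is no in-paper proof to compare against, and your proposal must stand on its own.

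Your framework is sound as far as it goes: the locus $\pi^{-1}(M_n^{\circ})$ is indeed an irreducible open subset of $\mathcal{C}$ of dimension $n^2+n$, and its closure $W$ is therefore an irreducible component. The reduction via the $R=k[A,B]$-module structure of $k^n$ is also correct: a nontrivial idempotent in $R$ would split $k^n$, so indecomposability forces $R$ local Artinian with residue field $k$, and translating by scalars then reduces to the case of a commuting nilpotent pair. The direct-sum case does require a little care (the shift $c$ must itself tend to $0$ along the degeneration, so you really want a two-parameter family and a curve in its base passing through the origin with generic direction), but that is a standard fix.

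The genuine gap is exactly where you flag it: the nilpotent case. Worse, the specific lifting strategy you sketch does not obviously work. In block form with the common eigenvector $v$ as the first basis vector, the commutation condition for a lift $(A_t,B_t)$ of $(\bar A_t,\bar B_t)$ reads $\alpha_t(\bar B_t - d_t I) = \beta_t(\bar A_t - c_t I)$, so one is tempted to set $\beta_t = \alpha_t(\bar B_t - d_t I)(\bar A_t - c_t I)^{-1}$. But as $t\to 0$ both $\bar A_t$ and $c_t$ tend to nilpotent (resp.\ zero), so $(\bar A_t - c_t I)^{-1}$ blows up, and there is no a priori reason for $\beta_t$ to converge to the prescribed $\beta$. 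Making this work requires controlling the degeneration $(\bar A_t,\bar B_t)$ much more carefully than induction alone provides; this is precisely where Gerstenhaber's argument via dominance of nilpotent orbits (or, in another variant, the Motzkin--Taussky/Baranovsky-type arguments) does real work that you have not reproduced. You are candid that this is the crux, but as written the proposal relies on citing \cite{G} for the heart of the proof rather than supplying it, so it does not constitute an independent proof of the theorem.
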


\begin{cor} When $g=1$, $P(T)$ is monic of degree $n^2+(2g-1)n$.
\end{cor}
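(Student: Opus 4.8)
The plan is to combine Corollary~\ref{degree}, which already gives $\deg P \geq n^2 + (2g-1)n = n^2 + n$ when $g=1$, with a matching upper bound on the degree and an explicit determination of the leading coefficient. Write $d = n^2 + n$ and let $V \subseteq \mathbb{A}^{2n^2}$ be the reduced variety of commuting pairs of $n\times n$ matrices, so that $R \subseteq V$. By Theorem~\ref{pairs} (Gerstenhaber), $V$ is irreducible --- in fact \emph{geometrically} irreducible, since Gerstenhaber's argument is valid over any algebraically closed field --- and by Theorem~\ref{GS} with $k=2$ it has dimension $d$. A constructible set of dimension $e$ over $\mathbb{F}_q$ has $O(q^e)$ rational points, so $P(q) = \#R(\mathbb{F}_q) \leq \#V(\mathbb{F}_q) = O(q^d)$, whence $\deg P \leq d$. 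Together with Corollary~\ref{degree} this gives $\deg P = d = n^2 + (2g-1)n$.

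For the leading coefficient I keep the notation from the proof of Corollary~\ref{degree}: let $\Delta \subseteq V$ be the closed locus where one of the two matrices has a repeated eigenvalue, $D \subseteq V$ the closed locus where one of them is singular, and $U = V \setminus (\Delta \cup D)$. Taking $X_1 = X_2$ to be a regular semisimple element of $GL_n(\mathbb{F}_q)$ --- which exists for every $q$, e.g.\ a generator of $\mathbb{F}_{q^n}^\times \subseteq GL_n(\mathbb{F}_q)$ --- exhibits a point of $U$, so $\Delta \cup D$ is a \emph{proper} closed subset of the irreducible $V$ and hence $\dim(\Delta \cup D) \leq d-1$. Since a commuting pair of invertible matrices each with pairwise distinct eigenvalues consists of semisimple matrices, we have $U \subseteq R \subseteq V$. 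Now $\#R(\mathbb{F}_q) = \#U(\mathbb{F}_q) + \#(R\setminus U)(\mathbb{F}_q)$ with $R\setminus U \subseteq \Delta \cup D$, so $\#(R\setminus U)(\mathbb{F}_q) = O(q^{d-1})$; and the Lang--Weil estimate applied to the geometrically irreducible $d$-dimensional $V$, together with $\dim(V\setminus U)\leq d-1$, gives $\#U(\mathbb{F}_q) = q^d + O(q^{d-1/2})$. Hence $P(q) = q^d + O(q^{d-1/2})$. Since $P(T)\in\mathbb{Z}[T]$ (Proposition~\ref{int}) has degree exactly $d$, dividing by $q^d$ and letting $q\to\infty$ through prime powers forces the leading coefficient of $P$ to be $1$; thus $P(T)$ is monic of degree $n^2+(2g-1)n$.

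The steps that look routine but deserve care are the two emphasized claims above: that Gerstenhaber's theorem yields \emph{geometric} irreducibility of $V$ (which is what makes Lang--Weil applicable), and that $\Delta \cup D$ is a genuinely \emph{proper} subvariety, which requires producing a regular semisimple invertible matrix over every finite field, including the small ones where no diagonal matrix has $n$ distinct nonzero entries. These are the only real obstacles; the rest is dimension bookkeeping. If one prefers to avoid Lang--Weil, the last step can instead be done with the zeta-function computation from the proof of Proposition~\ref{int}: the leading coefficient of $P$ is the multiplicity of $q^d$ as a reciprocal root of the denominator of $Z(R,T)$ in lowest terms; this multiplicity is unchanged on passing from $Z(R,T)$ to $Z(U,T)$ and then to $Z(V,T)$, because the discarded sets have dimension $\leq d-1$ and hence (by Deligne's weight bound) contribute only reciprocal roots of absolute value $\leq q^{d-1}$; and for a geometrically irreducible $V$ of dimension $d$ it equals $1$, since $H^{2d}_c(V_{\bar{\mathbb{F}}_q},\mathbb{Q}_\ell)\cong\mathbb{Q}_\ell(-d)$ is one-dimensional while the lower $H^i_c$ have weights $<2d$.
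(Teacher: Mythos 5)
Your proof is correct and follows essentially the same route as the paper: sandwich $R$ between the irreducible variety $V$ of commuting pairs and $V\setminus(\Delta\cup D)$, use Gerstenhaber's theorem together with the Guralnick--Sethuraman dimension count, and read off the leading coefficient from the fact that the unique top-dimensional stratum is geometrically irreducible of dimension $n^2+n$. You fill in details the paper leaves implicit (the Lang--Weil estimate, or equivalently the one-dimensionality of $H^{2d}_c$, to pin the leading coefficient to $1$), but the worry about exhibiting a regular semisimple invertible matrix over every $\mathbb F_q$ is unnecessary: $\dim(\Delta\cup D)$ is a geometric invariant, computed over $\bar{\mathbb F}_q$ where diagonal matrices with distinct nonzero entries are plentiful, and the estimate $\#U(\mathbb F_q)=q^d+O(q^{d-1/2})$ needs only $\dim(V\setminus U)<d$, not an $\mathbb F_q$-rational point of $U$.
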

\begin{proof} Since the variety $V$ of pairs of commuting matrices is irreducible, we may write $R_1=V$ and $R_1^\prime=\Delta\cup D$, and $V\supset R\supset V-(\Delta\cup D)$ as in the proof of Corollary~\ref{degree}. All remaining $R_i$ are contained in $\Delta\cup D$, which is a closed subvariety of an irreducible variety and therefore of strictly lower dimension.
\end{proof}
\section{A Divisibility Theorem for Profinite Groups}\label{divisibility}

\subsection{Introduction and Statement of Theorem}
In ``On the divisibility of $\#\Hom(\Gamma, G)$ by $|G|$,'' \cite{GV} Cameron Gordon and Fernando Rodriguez-Villegas prove that for a finitely-generated group $\tilde{\Gamma}$, $\tilde{\Gamma}$ has infinite abelianization if and only if it satisfies the divisibility condition in the paper's title for all finite groups $G$.
This extends a result of Louis Solomon \cite{So} which proves the same, assuming $\tilde\Gamma$ has a presentation with more generators than relations. By \emph{homomorphism}, we will always mean a continuous homomorphism, where countable groups are given the discrete topology. We now state and prove the following profinite analogue of \cite{GV}:

\begin{thm}\label{profiniteGV} Let $S$ be a set of primes (not necessarily finite), and let $\hat{\mathbb{Z}}_S=\varprojlim \mathbb{Z}/n$, where the the inverse limit is taken over all natural numbers $n$ not divisible by any prime in $S$. Then for any topologically finitely generated profinite group $\Gamma$ and finite group $G$,
 \[\frac{\#\Hom(\Gamma \rtimes \hat{\mathbb{Z}}_S, G)}{|G|}\in \mathcal{S}^{-1}\mathbb Z,\]
 where $\Gamma \rtimes \hat{\mathbb{Z}}_S$ is any semidirect product of $\Gamma$ and $\hat{\mathbb{Z}}_S$, and $\mathcal{S}$ is the multiplicative set generated by the elements of $S$. Conversely, if $\tilde{\Gamma}$ is topologically finitely generated and
 \[\frac{\#\Hom(\tilde{\Gamma}, G)}{|G|}\in \mathcal{S}^{-1}\mathbb Z\]
 for all finite groups $G$, then there exists a $\Gamma$ with $\tilde{\Gamma}\cong \Gamma \rtimes \hat{\mathbb{Z}}_S$.
\end{thm}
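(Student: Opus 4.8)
The plan is to prove the two assertions separately. For the divisibility statement I would adapt the orbit-counting argument of Gordon and Rodriguez-Villegas, feeding it through the coset form of Frobenius's theorem; for the converse I would use that $\hat{\mathbb Z}_S$ is a projective profinite group, so that the existence of the semidirect-product structure is equivalent to the existence of a continuous surjection onto $\hat{\mathbb Z}_S$.

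\textbf{The divisibility statement.} Fix the continuous action $\phi\colon\hat{\mathbb Z}_S\to\operatorname{Aut}(\Gamma)$ defining the semidirect product, let $t=1\in\hat{\mathbb Z}_S$ be the topological generator, so that conjugation by $t$ acts on $\Gamma$ as $\phi_t:=\phi(t)$, write $c_g$ for conjugation by $g\in G$, and let $m$ be the largest divisor of $|G|$ not divisible by any prime in $S$, so that $|G|/m$ is a product of primes in $S$. One checks that a continuous homomorphism $\rho\colon\Gamma\rtimes\hat{\mathbb Z}_S\to G$ amounts to a pair $(\psi,g)$ with $\psi=\rho|_\Gamma\in\Hom(\Gamma,G)$, with $g=\rho(t)$ satisfying $g^m=1$ (equivalently, $t\mapsto g$ extends continuously to $\hat{\mathbb Z}_S$), and with $\psi\circ\phi_t=c_g\circ\psi$; the last relation need only be imposed for $t$, since it then holds throughout $\hat{\mathbb Z}_S$ by density of $\langle t\rangle$. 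For a fixed $\psi$ the admissible $g$ form either the empty set or a single coset of the centralizer $C_G(\psi(\Gamma))$, and any admissible $g$ normalizes $\psi(\Gamma)$, hence normalizes $C_G(\psi(\Gamma))$. I would then group the pairs by the $G$-conjugacy class of $\psi$: the number of admissible $g$ is constant on a class, the class has size $|G|/|C_G(\psi(\Gamma))|$, and because the admissible $g$ form a coset of the normal subgroup $C_G(\psi(\Gamma))$ of $\langle g, C_G(\psi(\Gamma))\rangle$, the coset form of Frobenius's theorem \cite{F} shows their number is zero or divisible by $\gcd\bigl(m,|C_G(\psi(\Gamma))|\bigr)$. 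As $|C_G(\psi(\Gamma))|$ divides $|G|$, that gcd is exactly the largest divisor of $|C_G(\psi(\Gamma))|$ prime to $S$, and its product with $|G|/|C_G(\psi(\Gamma))|$ is a multiple of $m$. Summing over classes shows $m\mid\#\Hom(\Gamma\rtimes\hat{\mathbb Z}_S,G)$, whence the quotient by $|G|$ lies in $\mathcal S^{-1}\mathbb Z$.

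\textbf{The converse.} I would first record the structural fact, valid for any profinite group $\tilde\Gamma$: $\tilde\Gamma\cong\Gamma\rtimes\hat{\mathbb Z}_S$ for some $\Gamma$ if and only if $\tilde\Gamma$ admits a continuous surjection onto $\hat{\mathbb Z}_S$. Indeed $\hat{\mathbb Z}_S$ is the free pro-$\mathcal C$ group of rank one for the extension-closed variety $\mathcal C$ of finite groups of order prime to $S$, hence a projective profinite group; so any continuous surjection $\pi\colon\tilde\Gamma\twoheadrightarrow\hat{\mathbb Z}_S$ has a continuous section, giving $\tilde\Gamma\cong\ker\pi\rtimes\hat{\mathbb Z}_S$ (the converse being trivial). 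It thus suffices to deduce from the hypothesis that $\tilde\Gamma$ surjects onto $\hat{\mathbb Z}_S=\prod_{\ell\notin S}\mathbb Z_\ell$. Since $\tilde\Gamma$ is topologically finitely generated, $\tilde\Gamma^{\mathrm{ab}}$ is a finitely generated profinite abelian group, and such a surjection exists precisely when the $\ell$-primary part of $\tilde\Gamma^{\mathrm{ab}}$ is infinite for every prime $\ell\notin S$. If this failed for some $\ell\notin S$, that $\ell$-primary part would be finite, of order $\ell^{c}$ say; taking $G=\mathbb Z/\ell^{c+1}$, every homomorphism $\tilde\Gamma\to G$ factors through the $\ell$-primary part of $\tilde\Gamma^{\mathrm{ab}}$, so $\#\Hom(\tilde\Gamma,G)$ is a power of $\ell$ not exceeding $\ell^{c}$; its $\ell$-adic valuation is then less than that of $|G|=\ell^{c+1}$, so $\#\Hom(\tilde\Gamma,G)/|G|$ has $\ell$ in its reduced denominator, and since $\ell\notin S$ this contradicts the hypothesis. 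Hence every such $\ell$-primary part is infinite, $\tilde\Gamma\twoheadrightarrow\hat{\mathbb Z}_S$, and the structural fact finishes the proof.

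\textbf{The main obstacle.} The divisibility statement is the delicate part. In contrast to the classical setting, divisibility by the full $|G|$ genuinely fails, so one must determine exactly which part of $|G|$ survives; the crux is to run the orbit count through the coset form of Frobenius's theorem and verify that the divisibility lost at each centralizer is precisely its $S$-part, so that what remains is exactly $m$. Identifying $\Hom(\Gamma\rtimes\hat{\mathbb Z}_S,G)$ with constrained pairs in the profinite rather than discrete setting, and invoking the projectivity of $\hat{\mathbb Z}_S$, are routine by comparison.
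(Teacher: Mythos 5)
Your converse argument is correct, and it takes a slightly different (more conceptual) route than the paper: you cite the projectivity of $\hat{\mathbb Z}_S$ to split a continuous surjection onto it, where the paper instead constructs the section by hand (pick $h$ with $f(h)=1$, observe that the supernatural order of $h$ forces $\overline{\langle h\rangle}$ to contain a direct factor isomorphic to $\hat{\mathbb Z}_S$ on which $f$ restricts to an isomorphism). Both work; the paper's version is self-contained, yours is shorter if one is willing to invoke the theory of projective profinite groups. Your argument that a finite $\ell$-primary part of $\tilde\Gamma^{\mathrm{ab}}$ for some $\ell\notin S$ violates the divisibility hypothesis is essentially the paper's.

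The divisibility half, however, has a real gap. You handle all primes outside $S$ at once and invoke a ``coset form of Frobenius's theorem'' asserting that, for $C$ normal in $K=\langle g,C\rangle$ and $m$ the $S'$-part of $|G|$, the number of $x\in Cg$ with $x^m=1$ is zero or divisible by $\gcd(m,|C|)$. No such general-$m$ statement appears in \cite{F} or is otherwise a standard citation. The paper proves only the prime-power case (Lemma~\ref{coset}), and that proof uses a device specific to prime powers: the action of $(\mathbb Z/p^{r+s})^\times$ on elements of $p$-power order and the fact that $K/C$ there is cyclic of $p$-power order, so that orbits distribute evenly over cosets $Cg^u$ with $p\nmid u$. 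That mechanism does not transfer directly to composite $m$. This is exactly why the paper sidesteps the general coset count: it factors $\Gamma\rtimes\hat{\mathbb Z}_S\cong(\Gamma\rtimes\hat{\mathbb Z}_{S\cup\{p\}})\rtimes\mathbb Z_p$ for each $p\notin S$, reduces the whole problem to bounding a single $p$-adic valuation, and then applies Lemma~\ref{coset} for that one prime. If you want to keep your all-primes-at-once phrasing, you must either prove the coset Frobenius statement for general $m$, or (more in the spirit of the paper) apply the prime-power lemma prime by prime to the coset $C_\phi g_\phi$; otherwise the key numerical step is unsupported. Your bookkeeping afterward (that $\gcd(m,|C_\phi|)\cdot |G|/|C_\phi|$ is a multiple of $m$) is correct and matches the paper's valuation computation.
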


\begin{rem} This result is precisely the profinite version of \cite{GV}, because a finitely generated group has infinite abelianiztion if and only if it is of the form $\Gamma\rtimes \mathbb Z$. \end{rem} 

\begin{cor}\label{Laurent} Let $P(T)$ be as in the previous section, i.e. the unique polynomial such that $P(q)=\Hom(\pi_1(A_g),GL_n(\mathbb F_q))$ whenever $q$ is a power of $p$, and denote by $G(T)$ the polynomial such that $G(q)=|GL_n(\mathbb F_q)|$. Then
 \[ \frac{P(T)}{|GL_n(\mathbb F_q)|}\in \mathbb Z[T,\frac{1}{T}].\]
\end{cor}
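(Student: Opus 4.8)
The plan is to deduce the corollary directly from Theorem~\ref{profiniteGV} together with Proposition~\ref{int}. First I would recall that the $p$-rank $0$ case means $\lambda = 0$, so that $\pi_1(A_g) \cong (\hat{\mathbb Z}^\prime)^{2g}$, where $\hat{\mathbb Z}^\prime = \varprojlim_{(n,p)=1}\mathbb Z/n = \hat{\mathbb Z}_S$ with $S = \{p\}$ the single-element set containing the characteristic. The key structural observation is that $(\hat{\mathbb Z}^\prime)^{2g}$ can be written as a semidirect (in fact direct) product $\Gamma \rtimes \hat{\mathbb Z}_S$ with $\Gamma = (\hat{\mathbb Z}^\prime)^{2g-1}$, which is topologically finitely generated, and $\hat{\mathbb Z}_S = \hat{\mathbb Z}^\prime$. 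Applying Theorem~\ref{profiniteGV} with $G = GL_n(\mathbb F_q)$, we conclude that
\[
\frac{\#\Hom(\pi_1(A_g), GL_n(\mathbb F_q))}{|GL_n(\mathbb F_q)|} \in \mathcal S^{-1}\mathbb Z,
\]
where $\mathcal S$ is the multiplicative set generated by $p$; that is, this ratio is a rational number whose denominator is a power of $p$.

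Next I would invoke Proposition~\ref{int}, which gives that $P(T) \in \mathbb Z[T]$, and the standard fact that $G(T) = |GL_n(\mathbb F_{(\cdot)})|$ is the polynomial $G(T) = \prod_{i=0}^{n-1}(T^n - T^i)$, which factors as $T^{\binom{n}{2}}\prod_{i=1}^{n}(T^i - 1)$. The rational function $P(T)/G(T)$ is then a well-defined element of $\mathbb Q(T)$, and I want to show it lies in $\mathbb Z[T, T^{-1}]$. Write $G(T) = T^{\binom{n}{2}} H(T)$ where $H(T) = \prod_{i=1}^n (T^i-1)$ has nonzero constant term and leading coefficient $1$; then $T^{-\binom{n}{2}}$ is already a monomial, so it suffices to show $P(T)/H(T) \in \mathbb Z[T]$, i.e. that $H(T) \mid P(T)$ in $\mathbb Z[T]$. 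Since $H$ is monic, $H \mid P$ in $\mathbb Z[T]$ if and only if $H \mid P$ in $\mathbb Q[T]$, which I can check by verifying that every root of $H$ (each being a root of unity, hence an algebraic integer with absolute value $1$) is a root of $P$ with at least the same multiplicity.

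The main obstacle is converting the arithmetic divisibility statement "$G(q) \mid P(q)$ up to a power of $p$, for all prime powers $q$" into the polynomial divisibility "$H(T) \mid P(T)$ in $\mathbb Q[T]$". The clean route is: for each primitive $d$-th root of unity $\zeta$ appearing as a root of $H$, and for each prime $\ell \neq p$ with $\ell \equiv 1 \pmod d$ (there are infinitely many by Dirichlet), the polynomial $T^i - 1$ has many roots modulo powers of $\ell$, and one shows by choosing $q = \ell$ ranging over such primes that the order of vanishing of $P$ at $\zeta$ must be at least that of $H$ — using that $\gcd(\ell, p) = 1$ forces the $p$-power denominator in Theorem~\ref{profiniteGV} to contribute nothing to $\ell$-adic valuations, so the genuine integrality $v_\ell(P(\ell)/G(\ell)) \geq 0$ holds and can be fed into the cyclotomic-valuation argument (as in the proof of Proposition~\ref{int}, comparing poles of the zeta function, or directly via the theory of $\mathbb Z$-valued polynomials on infinitely many residues). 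Once $H(T) \mid P(T)$ in $\mathbb Q[T]$ is established, monicity of $H$ upgrades this to divisibility in $\mathbb Z[T]$, and dividing off the remaining $T^{\binom{n}{2}}$ yields $P(T)/G(T) \in \mathbb Z[T, T^{-1}]$, completing the proof.
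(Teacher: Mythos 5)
Your high-level strategy matches the paper: apply Theorem~\ref{profiniteGV} with $S=\{p\}$ (writing $\pi_1(A_g)\cong(\hat{\mathbb Z}')^{2g-1}\times\hat{\mathbb Z}_S$) to get that $P(q)/G(q)$ has $p$-power denominator for all $q=p^m$, factor $G(T)=T^{\binom{n}{2}}H(T)$ with $H(T)=\prod_{i=1}^n(T^i-1)$ monic, and then combine $P\in\mathbb Z[T]$ (Proposition~\ref{int}) with monicity of $H$ to upgrade $H\mid P$ from $\mathbb Q[T]$ to $\mathbb Z[T]$.

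However, the crucial step --- establishing $H\mid P$ in $\mathbb Q[T]$ --- contains a genuine gap. You propose to evaluate at $q=\ell$ for primes $\ell\neq p$ chosen via Dirichlet and to use ``the genuine integrality $v_\ell(P(\ell)/G(\ell))\geq 0$.'' But Theorem~\ref{profiniteGV}, and the identity $P(q)=\#\Hom(\pi_1(A_g),GL_n(\mathbb F_q))$ feeding into it, are only available when $q$ is a power of $p$; for $\ell\neq p$, $P(\ell)$ is just a polynomial evaluation with no homomorphism-counting content. Indeed the Remark following Theorem~\ref{prank0} explicitly notes that polynomiality of the count fails in characteristic $\ell\neq p$. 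So you cannot plug in $q=\ell$; all the arithmetic information lives at $q=p^m$.

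The correct route uses only those $q$ and is simpler than a cyclotomic-valuation argument. Since $H(q)=\prod_i(q^i-1)$ is coprime to $p$ when $q=p^m$, the containment $P(q)/G(q)\in\mathbb Z[1/p]$ forces $H(q)\mid P(q)$ exactly in $\mathbb Z$. Now divide with remainder: $P=Q_1H+R_1$ with $Q_1,R_1\in\mathbb Z[T]$ and $\deg R_1<\deg H$ (possible since $H$ is monic). Then $R_1(q)/H(q)=P(q)/H(q)-Q_1(q)\in\mathbb Z$ for every $q=p^m$, while $R_1(q)/H(q)\to 0$ as $q\to\infty$ because $\deg R_1<\deg H$. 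If $R_1\neq 0$, then for $q$ large enough that $R_1(q)\neq 0$ we would obtain a nonzero integer of absolute value less than $1$ --- a contradiction. Hence $R_1=0$, so $H\mid P$ in $\mathbb Z[T]$ and $P/G=T^{-\binom{n}{2}}(P/H)\in\mathbb Z[T,T^{-1}]$. This growth argument is what the paper's terse ``Thus $T^nP(T)/G(T)\in\mathbb Q[T]$ for sufficiently large $n$'' is alluding to.
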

\begin{proof} Choose $S=\{p\}$. Writing $\pi_1(A_g)\cong (\hat{\mathbb Z}^\prime)^{2g}\cong (\hat{\mathbb Z}^\prime)^{2g-1}\times\hat{\mathbb Z}^\prime\cong(\hat{\mathbb Z}^\prime)^{2g-1}\times\hat{\mathbb Z}_S$, we see that for all $q$, $P(q)/|GL_n(\mathbb F_q)|$ is a rational number whose denominator is divisible only by the prime $p$. Thus $T^{n}\frac{P(T)}{G(T)}\in \mathbb Q[T]$ for sufficiently large $n$. Since $G(T)$ is monic, $T^{n}\frac{P(T)}{G(T)}\in \mathbb Z[T]$.
\end{proof}
\subsection{Proof (first statement)}
In this section we prove that for any topologically finitely generated $\Gamma$, the number of homomorphisms from $\Gamma \rtimes \hat{\mathbb Z}_S$ to $G$, when divided by $|G|$, has denominator divisible only by primes in $S$.

We first recall a theorem of Frobenius. \cite{F} (See \cite{IR} for a short elementary proof.)
\begin{thm}[Frobenius]\label{Frob}
  If $H$ is a finite group and $n\bigm\vert |H|$, then the number of elements of $H$ with order dividing $n$ is a multiple of $n$.
 \end{thm}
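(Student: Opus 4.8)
The plan is to prove the statement by strong induction on $n$, reducing at each stage to proper divisors of $n$, and to exploit the combinatorics of how elements of order dividing $n$ distribute among the subgroups of the relevant sizes. Write $f_H(n)$ for the number of $x\in H$ with $x^n=1$; the goal is $n\mid f_H(n)$ whenever $n\mid|H|$. The base case $n=1$ is trivial since $f_H(1)=1$.

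For the inductive step, first I would dispose of the case where $n$ is not the full order $|H|$ but still divides it, by a counting argument over cyclic subgroups, or more efficiently by the classical trick: for each $d\mid n$ let $g_H(d)$ be the number of elements of $H$ of exact order $d$, so $f_H(n)=\sum_{d\mid n}g_H(d)$. One shows that for each $d\mid n$ the quantity $g_H(d)$ is divisible by $\varphi(d)$ and, combining this with M\"obius inversion and an induction on the number of divisors, that the partial sums $\sum_{d\mid m}g_H(d)=f_H(m)$ are divisible by $m$ for every $m\mid n$. The cleanest route, which I would actually write up, is the original argument: reduce to the case $n=|H|$ by passing to a minimal counterexample, or use the identity that $\sum_{x\in H}(\text{number of }d\mid n\text{ with }x^d=1)$ can be re-expressed to force the divisibility. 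The heart of the matter is the following: if $n\mid |H|$ and $p$ is a prime with $p^a\parallel n$, then by comparing $f_H(n)$ and $f_H(n/p)$ and analyzing the fibers of the $p$-th power map on the set $\{x:x^n=1\}$, the contribution stratifies according to Sylow-type subgroup counts, each of which is controlled by Sylow's theorems.

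The main obstacle will be organizing the induction so that the divisibility propagates cleanly through all prime power factors of $n$ simultaneously; the subtlety is that knowing $n/p \mid f_H(n/p)$ does not immediately upgrade to $n\mid f_H(n)$ without understanding the set $\{x : x^n = 1, x^{n/p}\neq 1\}$, i.e., elements whose order is divisible by the full power $p^a$. Here I would invoke the structure of $p$-elements: the set of such $x$ is a union of cosets of a cyclic group of order $p$ generated by a suitable power, and a careful count shows the number of them is divisible by $p^a$ times a factor coming from $f_{H'}(n/p^a)$ for appropriate sections $H'$. Since this is a well-known classical theorem, I would keep the write-up short, either reproducing the short elementary proof from \cite{IR} or citing it directly; the paper already flags \cite{F} and \cite{IR} for this purpose, so a brief indication of the induction together with the reference suffices, and the real work of the section lies in deducing Theorem~\ref{profiniteGV} from it rather than in reproving Frobenius.
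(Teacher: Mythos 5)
The paper does not actually prove Theorem~\ref{Frob}: it states it, attributes it to Frobenius \cite{F}, and points to \cite{IR} for a short elementary proof. Since you ultimately conclude by deferring to \cite{IR} as well, your proposal ends up in the same place as the paper. However, the independent argument you sketch before falling back on the citation is not complete, and some of the intermediate claims are too vague to verify. You correctly note that $\varphi(d)$ divides the number of elements of exact order $d$ (each cyclic subgroup of order $d$ contributes $\varphi(d)$ generators), and you correctly identify the crux: knowing $(n/p)\mid f_H(n/p)$ does not by itself give $n\mid f_H(n)$, because one must control the set $\{x : x^n=1,\ x^{n/p}\neq 1\}$. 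But the proposed resolution --- that this set is ``a union of cosets of a cyclic group of order $p$ generated by a suitable power'' --- does not parse as stated: cosets in which ambient group, and generated by which power? This set is not in general stable under multiplication by any fixed element of order $p$, so the coset structure you invoke is not there. The actual classical arguments (Frobenius's original, or Isaacs--Robinson's) use a more delicate double induction (on $|H|$ and on $n$) together with a counting identity relating $f_H(n)$ across the lattice of divisors, not a direct coset decomposition of the ``new'' elements. Since the section's real content is the deduction of Theorem~\ref{profiniteGV}, citing \cite{IR} outright, as the paper does, is the right call; if you do want to include a self-contained proof, you would need to replace the coset heuristic with the genuine inductive bookkeeping from one of the cited sources.
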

 
\begin{lem}\label{coset} Let $G$ be a finite group, and $H$ a subgroup of order $p^rm$, with $p\nmid m$. Choose $g\in N_G(H)$ with $|g|$ a power of $p$. Then the set $\{x\in Hg: |x| \text{ is a power of $p$}\}$ has cardinality divisible by $p^r$.
\end{lem}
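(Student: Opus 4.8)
The plan is to reduce to the case where $H$ is normal and then play a symmetry of the set of $p$-elements off against Frobenius's theorem. First I would replace $G$ by $K:=H\langle g\rangle$: this changes neither the set $\{x\in Hg:|x|\text{ a power of }p\}$ nor the number $p^{r}$, and since $g$ normalizes $H$ we get $H\triangleleft K$ with $K/H=\langle \bar g\rangle$ cyclic; as $\bar g$ is the image of the $p$-element $g$, its order is a power of $p$, say $p^{s}$, so $|K|=p^{r+s}m$. If $s=0$ then $Hg=H$, every $p$-element of $H$ has order dividing the $p$-part $p^{r}$ of $|H|$, and Theorem~\ref{Frob} applied to $H$ with $n=p^{r}$ gives the claim. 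So assume $s\geq 1$.

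The heart of the argument is the observation that the $\phi(p^{s})=p^{s-1}(p-1)$ cosets $Hg^{j}$ with $\gcd(j,p)=1$ all contain the same number $N$ of $p$-elements. To see this, fix a positive integer $u$ with $\gcd(u,p)=1$. Every $p$-element $x\in K$ lies in a Sylow $p$-subgroup, so $|x|\mid p^{r+s}$ and $\langle x\rangle$ is cyclic of $p$-power order; hence $x\mapsto x^{u}$ is a bijection of $\langle x\rangle$ onto itself, and therefore $x\mapsto x^{u}$ is a well-defined bijection $\sigma_{u}$ of the set $\mathcal G$ of all $p$-elements of $K$ whose image generates $K/H$, with two-sided inverse $\sigma_{v}$ for any $v$ with $uv\equiv 1\pmod{p^{r+s}}$. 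Since $\sigma_{u}$ carries the $p$-elements of $Hg^{j}$ bijectively onto those of $Hg^{ju}$, and multiplication by units permutes the residues mod $p^{s}$ coprime to $p$ transitively, these counts all agree. As $\mathcal G$ is the disjoint union of the $p$-elements of these generator cosets, $|\mathcal G|=p^{s-1}(p-1)N$.

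Finally, set $L:=H\langle g^{p}\rangle\triangleleft K$, so $[L:H]=p^{s-1}$ and $|L|=p^{r+s-1}m$; the cosets of $H$ contained in $L$ are exactly the non-generator cosets, whence $\mathcal G=\{p\text{-elements of }K\}\setminus\{p\text{-elements of }L\}$. Theorem~\ref{Frob} shows $\#\{p\text{-elements of }K\}$ is divisible by $p^{r+s}$ and $\#\{p\text{-elements of }L\}$ by $p^{r+s-1}$, so $p^{r+s-1}\mid|\mathcal G|=p^{s-1}(p-1)N$; since $\gcd(p,p-1)=1$, comparing $p$-adic valuations forces $p^{r}\mid N$, and taking $j=1$ gives the result for $Hg$ itself. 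I expect the one non-routine point to be recognizing that the power map $x\mapsto x^{u}$, although not a homomorphism when $K$ is nonabelian, is nonetheless a bijection of the set of $p$-elements (it is one on each cyclic $p$-subgroup) and so transports the count over a single coset to the count over all generator cosets at once; everything else is Frobenius's theorem together with valuation bookkeeping.
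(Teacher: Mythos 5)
Your proof is correct and follows essentially the same route as the paper: reduce to $K=\langle H,g\rangle$, dispose of the case $g\in H$ via Frobenius, apply Frobenius to both $K$ and $L=\langle H,g^{p}\rangle$ to control the $p$-elements lying outside $L$, and then argue that these distribute equally among the $\varphi(p^{s})$ generator cosets $Hg^{u}$ ($p\nmid u$) so that division by $\varphi(p^{s})$ preserves divisibility by $p^{r}$. The only cosmetic difference is that you phrase the equidistribution via the explicit coset-to-coset bijections $x\mapsto x^{u}$, while the paper phrases the same observation in terms of orbits of the $(\mathbb Z/p^{r+s})^{\times}$-action on $p$-elements; these are the same idea.
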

\begin{proof} We may assume that $g\notin H$, for otherwise the lemma follows from Theorem~\ref{Frob}, and then reduce to the case that $G=\langle H, g\rangle$, since it suffices to prove the lemma for this subgroup. Let $p^s$ be the least positive integer with $g^{p^s}\in H$.

By Theorem \ref{Frob}, the number of elements of $G$ with $p$-power order is a multiple of $p^{r+s}$, and the number of such elements in $K=\langle H, g^p \rangle$ is a multiple of $p^{r+s-1}$. Thus the number of elements of $G\backslash K$ with $p$-power order is a multiple of $p^{r+s-1}$.

Let $x\in G\backslash K$ be of $p$-power order. Then we may write $x=yg^t$ with $y\in H$ and $p\nmid t$. The group $\mathbb Z/(p^{r+s})^\times$ acts on the set of all such $x$, $n\cdot x=x^n$.
The orbit of $x$ under this action is the set of generators of $\langle x\rangle$, and $\langle x \rangle$ surjects onto $\langle g\rangle/\langle g^{p^s}\rangle$. Each orbit inside $G\backslash K$ therefore contains the same number of elements in each coset of the form $Hg^u$ with $p\nmid u$. $G\backslash K$ is the union of $\varphi(p^s)=p^{s-1}(p-1)$ cosets of $H$, so the number of elements of $Hg$ with $p$-power order is equal to \newline
\nobreak{$\#\{x\in G\backslash K: |x|\text{ is a power of $p$}\}/\varphi(p^s)$}, and is thus divisible by $p^r$.
\end{proof}
For any $p\notin S$ we may write $\Gamma \rtimes \hat{\mathbb Z}_S\cong (\Gamma\rtimes\hat{\mathbb Z}_{S\cup\{p\}})\rtimes \mathbb Z_p$. Therefore, we have reduced the statement of the theorem to the following:

\begin{prop}
 Suppose $\Gamma$ is any topologically finitely generated profinite group, and let $v_p$ denote the $p$-adic valuation on the integers. Then
 \[ v_p\left(\frac{\#\Hom(\Gamma \rtimes \mathbb Z_p, G)}{|G|}\right)\geq 0.\]
\end{prop}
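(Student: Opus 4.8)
The plan is to reduce to counting, inside a suitable finite quotient, the pairs consisting of a homomorphism on $\Gamma$ and a compatible choice of image for a topological generator of $\mathbb{Z}_p$, and then to apply Lemma~\ref{coset} coset-by-coset. First I would fix a finite group $G$ and observe that since $\mathbb{Z}_p$ is procyclic, a homomorphism $\varphi\colon\Gamma\rtimes\mathbb{Z}_p\to G$ is the same data as a homomorphism $\rho\colon\Gamma\to G$ together with an element $y\in G$ whose order is a power of $p$, subject to the compatibility that conjugation by $y$ on $\rho(\Gamma)$ realizes the restriction of the $\mathbb{Z}_p$-action (translated through $\rho$); in particular $y$ must normalize $\rho(\Gamma)$, and more precisely $y$ must lie in the coset of $\rho(\Gamma)$ determined by $\rho$ and the action (after composing with the automorphism $\Gamma\to\Gamma$ given by a generator of $\mathbb{Z}_p$). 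Thus
\[
\#\Hom(\Gamma\rtimes\mathbb{Z}_p,G)=\sum_{\rho}\#\{y\in C_\rho: |y| \text{ is a power of } p\},
\]
where the sum is over the finitely many $\rho\in\Hom(\Gamma,G)$ that extend at all, and $C_\rho$ is the relevant coset of the normalizer-type subgroup attached to $\rho$.

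Next I would make the subgroup precise: given an extendable $\rho$, let $H_\rho\le G$ be the subgroup of elements that both normalize $\rho(\Gamma)$ and induce (via conjugation) the correct automorphism-up-to-inner on $\rho(\Gamma)$ — concretely, $H_\rho$ is the stabilizer, in $N_G(\rho(\Gamma))$, of the class of $\rho$ under the action twisted by the $\mathbb{Z}_p$-automorphism of $\Gamma$; the set of valid $y$ is then exactly one coset of $H_\rho$ inside its normalizer, and one checks $H_\rho$ contains a $p$-power-order element $g$ normalizing it so that Lemma~\ref{coset} applies with $H=H_\rho$. Writing $|H_\rho|=p^{r_\rho}m_\rho$ with $p\nmid m_\rho$, Lemma~\ref{coset} gives $p^{r_\rho}\mid \#\{y\in C_\rho:\,|y|\text{ a }p\text{-power}\}$. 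The point is that $p^{r_\rho}$ is precisely $v_p(|H_\rho|)$, and $|H_\rho|$ divides $|G|$, so each summand above is divisible by $p^{v_p(|H_\rho|)}$; to conclude $v_p(\#\Hom(\Gamma\rtimes\mathbb{Z}_p,G))\ge v_p(|G|)$ I need the $p$-part of $|G|$ to divide each $|H_\rho|$ — equivalently, $H_\rho$ contains a Sylow $p$-subgroup of $G$. This holds because a Sylow $p$-subgroup $P$ of $G$, being a $p$-group acting on the set of extensions of $\rho$ (whose cardinality is a power of $p$, as it is a coset of a $p$-subgroup, or: the relevant set is acted on freely enough), must fix the class of $\rho$; I would argue this via the standard fixed-point argument, that $P$ acts on the (nonempty, $p$-power-size) fiber over $\rho$ and hence stabilizes it, placing a conjugate of $P$ inside $H_\rho$.

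Summing, every term of $\sum_\rho$ is divisible by $|G|_p$, hence so is the total, giving $v_p\bigl(\#\Hom(\Gamma\rtimes\mathbb{Z}_p,G)/|G|\bigr)\ge 0$, which is the proposition; the reduction sentence preceding the proposition then upgrades this to the full first statement of Theorem~\ref{profiniteGV}. The main obstacle I anticipate is the bookkeeping in the second paragraph: correctly identifying the subgroup $H_\rho$ and the coset $C_\rho$, verifying that the $p$-power-order normalizing element $g$ required by Lemma~\ref{coset} really exists (one may take $g$ to be a suitable power of any valid $y$, or $1$ if the coset is trivial), and — most delicately — establishing that $H_\rho$ contains a full Sylow $p$-subgroup of $G$ rather than merely some $p$-subgroup, since that is exactly what converts the per-$\rho$ divisibility by $p^{r_\rho}$ into divisibility by $|G|_p$. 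Everything else is a matter of unwinding the semidirect-product structure and the fact that $\mathbb{Z}_p$ is topologically generated by a single element of infinite $p$-power order, so continuity forces its image to have $p$-power order.
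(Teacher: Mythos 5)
Your setup is correct, and you rightly identify Lemma~\ref{coset} as the key tool, but the concluding step has a genuine gap. You try to show that each individual summand $\#\{y\in C_\rho g_\rho : |y|\text{ is a power of }p\}$ is divisible by the full $p$-part of $|G|$, by arguing that your subgroup $H_\rho$ contains a Sylow $p$-subgroup of $G$. That claim is false in general. The set of valid images of $1\in\mathbb{Z}_p$ is a single coset of the centralizer $C_\rho=C_G(\rho(\Gamma))$ (not of any larger group), and $C_\rho$ need not contain a Sylow $p$-subgroup. Concretely: take $G=S_4$, $p=2$, $\Gamma=\hat{\mathbb Z}$ with the semidirect product trivial, and $\rho$ sending a topological generator to $(1\,2\,3)$. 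Then $C_\rho=\langle(1\,2\,3)\rangle$ has order $3$, the only valid $y$ of $2$-power order is the identity, and the summand for this $\rho$ is $1$, which is certainly not divisible by $|G|_2=8$. Your proposed fixed-point argument does not go through either: a Sylow $p$-subgroup acts on all of $\Hom(\Gamma,G)$ by conjugation but need not fix $\rho$, and the fiber over a fixed $\rho$ is a subset of a coset of $C_\rho$, which is generally not a $p$-group, so its cardinality need not be a power of $p$.

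What is missing is the orbit decomposition. Group the extendable $\rho$'s into $G$-conjugacy orbits $[\rho]$; each representative in $[\rho]$ contributes the same count, and the orbit has size $|G|/|C_\rho|$. Lemma~\ref{coset}, applied with $H=C_\rho$, gives that the per-$\rho$ count is divisible by $p^{v_p(|C_\rho|)}$. The orbit $[\rho]$ therefore contributes a multiple of $\dfrac{|G|}{|C_\rho|}\cdot p^{v_p(|C_\rho|)}$, which has $p$-adic valuation at least $v_p(|G|)$, and summing over orbits gives the proposition. In the example above, the orbit of $\rho$ has $8$ elements each contributing $1$, for an orbit total of $8=|G|_2$, as required. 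This is precisely the paper's argument: the index factor $|G|/|C_\rho|$ coming from the orbit size supplies exactly the $p$-divisibility that no individual summand can provide, which is why you cannot replace it with a Sylow-containment claim about $H_\rho$.
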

The proof of the proposition is now a straightforward modification of the argument in \cite{GV}.
\begin{proof}
 A homomorphism $\Phi :\Gamma \rtimes \mathbb{Z}_p$ is determined by its restriction $\phi=\Phi\Big{|}_\Gamma$ and the image of $1\in \mathbb Z_p$, which is an element $g\in G$ such that the order of $g$ is a power of $p$, subject to the condition that $\phi((-1)\gamma(1))=g^{-1}\phi(\gamma)g$ for all $\gamma\in\Gamma$. In particular, this condition ensures that $g$ normalizes $\phi(\Gamma)$, and thus normalizes the centralizer $C_\phi$ of $\phi(\Gamma)$.
 Observe that if a pair $(\phi, g)$ determines a well-defined homomorphism $\Gamma \rtimes \mathbb{Z}_p\to G$ as above, then so does $(\phi, x)$ if and only if $|x|$ is a power of $p$ and $x\in C_\phi g$.
 Thus \begin{align*} &\#\Hom(\Gamma \rtimes \mathbb{Z}_p,G)\\
=&\sum_\phi \#\{g \in G: |g|\text{ is a power of $p$},\  g^{-1}\phi(\gamma) g=\phi((-1)\gamma1)\ \forall\  \gamma\in \Gamma\},\end{align*}
 where the sum is taken over all $\phi$ which are restrictions of homomorphisms from $\Gamma\rtimes \mathbb Z_p$, ie. those $\phi$ for which there exists such a $g$.
 We let $G$ act by conjugation on the set of homomorphisms restricted to $\Gamma$. The stabilizer of $\phi$ under this action is $C_\phi$. 
 Denoting by the orbit of $\phi$ by $[\phi]$, each element of $[\phi]$ extends to the same number of homomorphisms on $\Gamma\rtimes \mathbb Z_p$. For each $[\phi]$ choose a representative $\phi$ and an element $g_\phi\in G$ such that $(\phi, g_\phi)$ determines a homomorphism.
  \begin{align*}
&\#\Hom(\Gamma \rtimes \mathbb{Z}_p,G)\\
=&\sum_{\phi} \#\{g \in G: |g|\text{ is a power of $p$},\ g^{-1}\phi(\gamma) g=\phi((-1)\gamma1)\ \forall\  \gamma\in \Gamma\}\\
=&\sum_{[\phi]} \frac{|G|}{|C_\phi|}\#\{g \in G: |g|\text{ is a power of $p$},\ g^{-1}\phi(\gamma) g=\phi((-1)\gamma1)\ \forall\  \gamma\in \Gamma\}\\
=&\sum_{[\phi]} \frac{|G|}{|C_\phi|}\#\{x \in C_\phi g_\phi: |x|\text{ is a power of $p$}\}.
 \end{align*}

The proposition follows by applying Lemma \ref{coset} to the coset $C_\phi g_\phi$.
 \end{proof}

\subsection{Proof of the Converse}
We have the following proposition:
\begin{lem}
 Let $S$ and $\mathcal{S}$ be as above, and $\tilde\Gamma$ a topologically finitely generated profinite group. Suppose that for all finite groups $G$, $\#\Hom(\tilde{\Gamma}, G)/|G|\in \mathcal{S}^{-1}\mathbb Z$. Then $\tilde\Gamma$ has
 \[\hat{\mathbb Z}_S=\varprojlim_{\substack{(n,p)=1\\ \forall p\in S}} \mathbb Z/n\] as a quotient.
\end{lem}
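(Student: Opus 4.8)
The plan is to pass to the abelianization and reduce to the structure theory of topologically finitely generated profinite abelian groups. Write $A=\tilde\Gamma/\overline{[\tilde\Gamma,\tilde\Gamma]}$ for the profinite abelianization of $\tilde\Gamma$; it is abelian and topologically finitely generated. Since $\hat{\mathbb Z}_S$ is abelian, $\tilde\Gamma$ has $\hat{\mathbb Z}_S$ as a (continuous) quotient if and only if $A$ does, so it suffices to build a continuous surjection $A\twoheadrightarrow\hat{\mathbb Z}_S$. Decompose $A\cong\prod_{\ell}A(\ell)$ into its pro-$\ell$ components; each $A(\ell)$ is a finitely generated $\mathbb Z_\ell$-module, hence by the structure theorem for finitely generated modules over the PID $\mathbb Z_\ell$ we have $A(\ell)\cong\mathbb Z_\ell^{a_\ell}\oplus(\text{finite})$. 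The goal then becomes: show $a_\ell\geq 1$ for every prime $\ell\notin S$.

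Fix such an $\ell$ and suppose for contradiction that $a_\ell=0$, i.e. $A(\ell)$ is finite. I would apply the hypothesis to the finite groups $G=\mathbb Z/\ell^m$. For abelian $G$ one has $\#\Hom(\tilde\Gamma,G)=\#\Hom(A,G)$, and since $\mathbb Z/\ell^m$ is an $\ell$-group, any continuous homomorphism out of $A=\prod_{\ell'}A(\ell')$ is trivial on every factor $A(\ell')$ with $\ell'\neq\ell$ (a finite quotient of a pro-$\ell'$ group that embeds in an $\ell$-group is trivial), and hence is determined by its restriction to $A(\ell)$. Therefore $\#\Hom(\tilde\Gamma,\mathbb Z/\ell^m)=\#\Hom(A(\ell),\mathbb Z/\ell^m)\leq|A(\ell)|$, a bound independent of $m$. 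But the hypothesis says $\#\Hom(\tilde\Gamma,\mathbb Z/\ell^m)/\ell^m\in\mathcal S^{-1}\mathbb Z$, and since $\ell\notin S$ this forces $\ell^m\mid\#\Hom(A(\ell),\mathbb Z/\ell^m)$, which is impossible once $\ell^m>|A(\ell)|$. This contradiction gives $a_\ell\geq 1$.

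It then remains to assemble the surjection. For each $\ell\notin S$ fix a continuous surjection $A(\ell)\twoheadrightarrow\mathbb Z_\ell$ (projection onto one of the $a_\ell\geq 1$ free summands), and for $\ell\in S$ take the zero map; together these give a continuous homomorphism $A=\prod_\ell A(\ell)\to\prod_{\ell\notin S}\mathbb Z_\ell=\hat{\mathbb Z}_S$. Its image is closed, and it surjects onto every finite quotient of $\hat{\mathbb Z}_S$ (any open subgroup of $\hat{\mathbb Z}_S$ contains all but finitely many of the factors $\mathbb Z_\ell$, and on the remaining finitely many factors the map is already onto), so the map is surjective. Composing with $\tilde\Gamma\twoheadrightarrow A$ exhibits $\hat{\mathbb Z}_S$ as a quotient of $\tilde\Gamma$.

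The one step requiring genuine care — and the only place topological finite generation is used essentially — is the boundedness of $\#\Hom(A(\ell),\mathbb Z/\ell^m)$ in $m$ in the finite case; without finite generation an infinite pro-$\ell$ abelian group such as $\prod_{i\geq 1}\mathbb Z/\ell$ can admit infinitely many homomorphisms to a fixed finite group, and the argument breaks. Everything else is routine profinite structure theory. The substantive part of the converse overall — promoting "$\hat{\mathbb Z}_S$ is a quotient" to a splitting $\tilde\Gamma\cong\Gamma\rtimes\hat{\mathbb Z}_S$ — is not part of this lemma and is handled in the lemmas that follow.
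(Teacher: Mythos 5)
Your proof is correct and takes essentially the same approach as the paper: both test the hypothesis against $G=\mathbb Z/\ell^s$ for $\ell\notin S$ and $s$ large, and both derive a contradiction from the observation that if the pro-$\ell$ part of the abelianization is finite (which, via the structure theorem for finitely generated $\mathbb Z_\ell$-modules, is forced once $\tilde\Gamma$ fails to surject onto some $\mathbb Z/\ell^m$), then $\#\Hom(\tilde\Gamma,\mathbb Z/\ell^s)$ is bounded independently of $s$. The paper phrases this contrapositively and compresses the structure theory into the single claim that the maximal pro-abelian $\ell$-quotient is a quotient of $(\mathbb Z/\ell^{m-1})^r$, whereas you make the decomposition $A\cong\prod_\ell A(\ell)$ and the assembly of the surjection explicit, but the underlying argument is the same.
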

\begin{proof}
 Suppose $\tilde\Gamma$ does not surject onto \[\varprojlim_{\substack{(n,p)=1\\ \forall p\in S}} \mathbb Z/n.\]
 Then for some prime $\ell \notin S$ and sufficiently large $m$, $\tilde\Gamma$ does not surject onto $\mathbb Z/\ell^m$. 
 All homomorphisms from $\tilde\Gamma$ to abelian $\ell$-groups factor through the maximal pro-abelian $\ell$-quotient of $\tilde\Gamma$, which is itself a quotient of $(\mathbb Z/\ell^{m-1})^r$, where $\tilde\Gamma$ is topologically generated by $r$ generators. 
 But there are at most $\ell^{r(m-1)}$ homomorphisms $(\mathbb Z/\ell^{m-1})^r\to\mathbb Z/\ell^s$.
 Taking $s>r(m-1)$ and $G=\mathbb Z/\ell^s$, we see that $\#\Hom(\tilde\Gamma, G)$ is not a multiple of $|G|$, and thus is not an element of $\mathcal{S}^{-1}\mathbb Z$.
\end{proof}
All that remains to be shown is that $\tilde{\Gamma}$ is a semidirect product, which follows from the proposition below.
\begin{prop}
 Let $f: H\to \hat{\mathbb{Z}}_S$ be a continuous surjection of profinite groups. Then $f$ has a continuous section with closed image.
 \end{prop}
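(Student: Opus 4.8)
The goal is to build a continuous section $s\colon \hat{\mathbb{Z}}_S \to H$ of the surjection $f\colon H \to \hat{\mathbb{Z}}_S$ whose image is closed. The plan is to produce a single element $h \in H$ whose closure $\overline{\langle h \rangle}$ maps isomorphically onto $\hat{\mathbb{Z}}_S$; then $s$ is defined by sending a topological generator $1 \in \hat{\mathbb{Z}}_S$ to $h$ and extending by continuity, and $\overline{\langle h \rangle}$ is compact hence closed in $H$. So everything reduces to finding such an $h$.

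First I would reduce to a procongruence/profinite-completion picture: since $\hat{\mathbb{Z}}_S = \varprojlim_{(n,\mathcal S)=1} \mathbb{Z}/n$, a section amounts to a compatible system of elements $h_n \in H/H_n$ (for a cofinal system of open normal subgroups $H_n$) mapping to a fixed compatible generating system of $\mathbb{Z}/n$'s; by compactness of $H$ (an inverse limit argument, König's lemma / Mittag-Leffler for the nonempty finite sets of partial lifts), it is enough to solve the lifting problem modulo each open normal subgroup. So the key step is: given a finite quotient $f_n\colon H/H_n \to \mathbb{Z}/n$ with $n$ coprime to $S$, and a chosen generator $\bar 1$ of $\mathbb{Z}/n$, find an element of $H/H_n$ of order exactly $n$ mapping to $\bar 1$, compatibly as $n$ grows. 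The main obstacle is this finite-group statement together with the compatibility: we need a \emph{coherent} choice across the tower, not just existence at each level.

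For the finite-group step I would argue as follows. Work with a finite group $\bar H := H/H_n$ surjecting onto $\mathbb{Z}/n$ with kernel $K$; write $|K| = n_0 m$ where all prime factors of $n_0$ lie among those of $n$ (equivalently $n_0$ is the $\pi$-part of $|K|$ for $\pi$ the set of primes dividing $n$) — actually it is cleaner to pass to a Sylow-type / Hall subgroup argument: since $n$ is invertible "towards $S$," the relevant primes dividing $n$ are all $\notin S$, and we want a complement to $K$ inside $f_n^{-1}(\mathbb{Z}/n)= \bar H$. The cyclic-quotient case is exactly the setting of the Schur–Zassenhaus theorem when $\gcd(n, |K|)=1$; in general one uses that any finite group surjecting onto a cyclic group $\mathbb{Z}/n$ contains an element mapping to a generator whose order is a multiple of $n$, and then raises to an appropriate power to kill the $K$-part — one takes a lift $x$ of $\bar 1$, notes $x^n \in K$, and replaces $x$ by $x^{a}$ where $a \equiv 1 \pmod n$ and $a \equiv 0 \pmod{|K|'}$ for a suitable unit exponent, so that the new element still maps to a generator but now has order exactly $n$ times a unit, iterating to reach order exactly $n$. (This is essentially the standard fact that $\hat{\mathbb{Z}}$-quotients of profinite groups split; I would cite or reprove it here rather than grind the arithmetic.)

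Finally I would assemble the coherent system: choose the $H_n$ to be a descending chain, and use that at each stage the set of valid lifts of the already-chosen generator downstairs is nonempty (by the finite-group step) and the transition maps between these finite nonempty sets are surjective, so by the inverse-limit-of-nonempty-finite-sets argument the full inverse limit is nonempty, yielding $h \in H$ with $\overline{\langle h \rangle} \cong \hat{\mathbb{Z}}_S$ mapping isomorphically under $f$. Then $s(1) := h$ defines the desired continuous section, and its image $\overline{\langle h\rangle}$ is closed because $H$ is Hausdorff and $\overline{\langle h\rangle}$ is compact. The one place to be careful is making sure the order of $h$ is "exactly $\hat{\mathbb{Z}}_S$" and not a proper quotient — i.e. that the generators chosen downstairs are genuinely compatible generators of the $\mathbb{Z}/n$ tower — which is handled by fixing once and for all a coherent generator $1 \in \hat{\mathbb{Z}}_S = \varprojlim \mathbb{Z}/n$ at the outset.
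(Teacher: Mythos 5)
Your approach is genuinely different from the paper's. The paper's proof is direct: take \emph{any} lift $h\in H$ of a topological generator $1\in\hat{\mathbb Z}_S$, observe that the procyclic group $\overline{\langle h\rangle}$ decomposes as a product $\prod_{p\in I}\mathbb Z_p\times\prod_{p\notin I}\mathbb Z/p^{n_p}$ of its pro-$p$ Sylow parts, note that $f(h)=1$ forces $\mathbb Z_p$ to appear for every $p\notin S$, and then extract the direct factor $K\cong\prod_{p\notin S}\mathbb Z_p\cong\hat{\mathbb Z}_S$, on which $f$ restricts to an isomorphism (since it sends a topological generator to a topological generator). No lifting through a tower is needed. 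You instead propose to construct a good lift via a finite-level argument plus a compactness/inverse-limit step.

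However, your finite-level claim has a genuine gap. You assert that for any finite quotient $\bar H = H/H_n$ surjecting onto $\mathbb Z/n$, one can find an element of order \emph{exactly} $n$ mapping to a fixed generator $\bar 1$. This is false. Take $\bar H=\mathbb Z/n^2$ mapping onto $\mathbb Z/n$ by reduction: every lift of $\bar 1$ is of the form $1+kn$, which is a unit mod $n^2$, hence of order $n^2$, never $n$. (This arises in your setting as $H=\hat{\mathbb Z}_S$, $H_n=n^2\hat{\mathbb Z}_S$.) Your exponent trick---replacing a lift $x$ by $x^a$ with $a\equiv 1\pmod n$ and $a$ killing the ``extra'' part of $\operatorname{ord}(x)$---only works when $\operatorname{ord}(x)=nm$ with $\gcd(n,m)=1$ (the Schur--Zassenhaus case); if a prime $\ell$ divides both $n$ and $m$, the constraint $a\equiv 1\pmod n$ forces $\gcd(a,\ell)=1$, so $a$ can never reduce the $\ell$-part of the order. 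Iterating does not help.

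The repair is to demand less: ask only for a lift whose order is \emph{coprime to every prime in $S$}, rather than equal to $n$. Writing $\operatorname{ord}(x)=N'N''$ with $N'$ the part supported off $S$ and $N''$ the part supported on $S$, note $\gcd(n,N'')=1$ since $n$ is prime to $S$, so by CRT there is $a$ with $a\equiv 1\pmod n$ and $a\equiv 0\pmod{N''}$; then $x^a$ still maps to $\bar 1$ and has order dividing $N'$, hence coprime to $S$. These sets of lifts are nonempty, finite, and stable under the transition maps (orders only shrink), so the inverse limit is nonempty and yields $h\in H$ with $f(h)=1$ whose supernatural order has no factor in $S$. Since $f(h)=1$ forces the order of $h$ to be \emph{at least} $\prod_{p\notin S}p^\infty$, equality holds, so $\overline{\langle h\rangle}\cong\hat{\mathbb Z}_S$ and $f$ restricts to an isomorphism on it. Notice that this repaired version is, in effect, the paper's proof distributed over a tower: the exponent $a$ kills the $S$-part at each finite stage, just as the paper's projection $\overline{\langle h\rangle}\to K$ discards the $S$-part once and for all. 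The paper's route is shorter because it never tries to control the lift's order at a finite level at all.
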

 \begin{proof}
Let $h\in H$ with $f(h)=1$. Consider the closed subgroup $\overline{<h>}$ generated by $h$. It is isomorphic to a product 
$\prod_{p\in I} \mathbb Z_p \times \prod_{p\notin I} \mathbb{Z}/p^{n_p}$, where $I$ is a set of primes.
 The order of $h$, as a supernatural number, is divisible by the order of $1\in \hat{\mathbb{Z}}_S$, and so $S$ is contained in the complement of $I$ (See \cite{Serre}). Thus $\overline{<h>}$ has a direct factor $K$ isomorphic to $\hat{\mathbb{Z}}_S$, which is a closed subgroup of $H$. $K$ is topologically generated by the image $k$ of $h$ under the projection map $\overline{<h>}\to K$, and $f(k)=1$. So $H$ has a closed subgroup $K$ which is abstractly isomorphic to $\hat{\mathbb{Z}}_S$, and $f$ sends a topological generator of $K$ to a topological generator of $\hat{\mathbb{Z}}_S$. The restriction of $f$ to $K$ is thus an isomorphism, and therefore admits a section.
\end{proof}

\begin{rem} We have in fact proven a stronger statement than the theorem. Namely, it suffices to check that $\#\Hom(\tilde\Gamma, G)/|G|\in\mathcal{S}^{-1}\mathbb Z$ only for groups of the form $G=\mathbb Z/{p^e}$, where $p\notin S$.\end{rem}

\bibliographystyle{plain}
\bibliography{AVMonodromyRepresentations}

\noindent \textsc{Department of Mathematics, Northwestern University.\\2033 Sheridan Road, Evanston, IL 60208, USA}\\
\noindent \emph{brett.frankel@northwestern.edu}

\copyright 2018. This manuscript version is made available under the CC-BY-NC-ND 4.0 license. \href{url}{http://creativecommons.org/licenses/by-nc-nd/4.0/}
\end{document}